\documentclass[11pt]{article}

\usepackage{enumerate}

\usepackage{mathrsfs}
\usepackage{color,latexsym,amsfonts,amssymb}
\usepackage{amsmath}
\usepackage{amsfonts}
\usepackage{diagbox}
\usepackage{graphicx}

\usepackage{titletoc}
\usepackage{latexsym}
\usepackage{multicol}
\usepackage{graphics}
\usepackage{subfigure}
\usepackage{indentfirst}
\usepackage{epsfig}
\usepackage{amsmath,amsthm,amssymb,mathrsfs,amsbsy,bm}
\usepackage[colorlinks=true, allcolors=blue]{hyperref}
\usepackage{color, xcolor} %

\numberwithin{equation}{section}

\def\pr{\textsf{P}} 
\def\ep{\textsf{E}} 
\def\Sbep{\widehat{\mathbb E}} 
\def\cSbep{\widehat{\mathcal E}} 
\def\Capc{\mathbb V} 
\def\cCapc{\mathcal V} 
\def\upCapc{\widehat{\mathbb V}} 
\def\outCapc{\widehat{\mathbb V}^{\ast}}
\def\outcCapc{\widehat{\mathcal V}^{\ast}} 
 \newcommand*{\dif}{\mathop{}\!\mathrm{d}}

\newtheorem{theorem}{Theorem}[section]
\newtheorem{definition}{Definition}[section]
\newtheorem{lemma}{Lemma}[section]

\newtheorem{proposition}{Proposition}[section]

\newtheorem{remark}{Remark}[section]

\begin{document}

\thispagestyle{plain} 

\setlength\abovedisplayskip{2pt}
\setlength\abovedisplayshortskip{0pt}
\setlength\belowdisplayskip{2pt}
\setlength\belowdisplayshortskip{0pt}

\title{\huge \bf Limit Laws of the Iterated Logarithm Under Sub-linear Expectations}
\author{Li-Xin Zhang$^{1,2}$\footnote{  Email:  stazlx@mail.zjgsu.edu.cn$^1$; stazlx@zju.edu.cn$^2$. Corresponding author}  and Yongsheng Song$^{3,4}$\footnote{  Email:  yssong@amss.ac.cn} \vspace{3mm}\\
 \small $^{1}$School of Statistics and Data Science, Zhejiang Gongshang University, Hangzhou, 310018\\
 \small $^{2}$Center for Data Science, Zhejiang University, Hangzhou, 310058\\
 \small $^{3}$State Key Laboratory of Mathematical Sciences, Academy of Mathematics and Systems Science, \\\small Chinese Academy of Sciences, Beijing, 100190\\
 \small $^{2}$School of Mathematical Sciences, University of Chinese Academy of Sciences, Beijing, 100049}
\date{}
\maketitle

\vspace{-1cm}
\begin{center}\begin{minipage}{13cm}
{\bf Abstract}:  {Let $\{Y_n; n\ge 1\}$ be a sequence of independent and identically distributed random variables with mean zero in Peng's framework of the sub-linear expectation space $(\Omega,\mathscr{H},\Sbep)$, and $S_n=\sum_{i=1}^nY_i$. In this paper, we establish a limit law of 
\begin{align*}\lim_{n\to \infty}\max_{k\le n}\frac{S_k}{\sqrt{2k \log\log n}}. 
\end{align*}
Different from the result obtained by Chen (2015) in which the limit is a constant, it is shown that under the upper capacity the limit may be prescribed as a given function of $Y_1,Y_2,\ldots$, taking values in the standard deviation interval. As a result, it is also shown that the set of limit points in the compact law of the iterated logarithm can be a symmetric random interval.  }

{\bf Keywords}:  {sub-linear expectation, capacity, law of the iterated logarithm}
 
 {\bf MSC (2010) Subject classification }: 60F15, 60F05
\end{minipage}\end{center}

\section{Introduction and Notation}\label{sect1}
 \setcounter{equation}{0}
Let $\{Y_n; n\ge 1\}$ be a sequence of independent and identically distributed random variables with mean zero and variance $\sigma^2$ ($0\le\sigma<\infty$) on a probability space $(\Omega,\mathcal{F},\pr)$, and let $S_n=\sum_{j=1}^n Y_j$. The well-known law of the iterated logarithm states that
 \begin{align}\label{eq:K-LIL} \pr\bigg(\limsup_{n\to \infty} \frac{S_n}{\sqrt{2n\log\log n}}=\limsup_{n\to \infty} \frac{\max_{k\le n}S_k}{\sqrt{2n\log\log n}}=\sigma\bigg)=1, 
 \end{align}
where $\log x=\ln (x\vee {\rm e})$. Different from the upper limit "$\limsup$" above, Chen \cite{Chen2015} established the limit form of the law of the iterated logarithm:
\begin{align}\label{eq:chenLIL} \pr\bigg(\lim_{n\to \infty} \frac{\max_{k\le n}S_k/\sqrt{k}}{\sqrt{2\log\log n}}=\sigma\bigg)=1. 
\end{align}
If we let $k_n$ be the index $k\le n$ such that $S_k/\sqrt{k}$ attains its maximum, then
$$\frac{S_n}{\sqrt{2n\log\log n}} \le \frac{\max_{k\le n}S_k/\sqrt{k}}{\sqrt{2\log\log n}}\le \frac{S_{k_n}}{\sqrt{2k_n\log\log k_n}}, $$
and it is easy to verify that (\ref{eq:chenLIL}) implies (\ref{eq:K-LIL}). The purpose of this paper is to establish a limit law of the form (\ref{eq:chenLIL}) in Peng's framework of sub-linear expectations.

  We use the framework and notation of Peng \cite{Peng2008a,Peng2019}. Let $(\Omega,\mathcal{F})$ be a given measurable space, and let $\mathscr{H}$ be a linear space of real functions defined on $(\Omega,\mathcal{F})$ satisfying the condition that if $X_1,\cdots, X_n\in\mathscr{H}$, then $\varphi(X_1,\cdots,X_n)\in\mathscr{H}$ for any $\varphi\in C_{l,Lip}(\mathbb{R}^n)$, where $C_{l,Lip}(\mathbb{R}^n)$ denotes the linear space of locally Lipschitz functions $\varphi$ satisfying the following condition: $\varphi\in C_{l,Lip}(\mathbb{R}^n)$ if and only if there exists a constant $C>0$ and $m\in\mathbb{N}$, depending on $\varphi$, such that
$$\vert \varphi(\bm{x})-\varphi(\bm{y}) \vert\leq C(1+\vert \bm{x}\vert^m+\vert\bm{y}\vert^m)\vert\bm{x}-\bm{y}\vert,\quad\forall\bm{x},\bm{y}\in\mathbb{R}^n.$$
$\mathscr{H}$ is regarded as the space of "random variables"; in this case, we write $X\in\mathscr{H}$. We also use $C_{b,Lip}(\mathbb{R}^n)$ to denote the space of bounded Lipschitz functions, and $C_b(\mathbb R^n)$ to denote the space of bounded continuous functions.
\begin{definition}
A sub-linear expectation $\Sbep :\mathscr{H}\rightarrow\bar{\mathbb{R}}$ on the space $\mathscr{H}$ is a function satisfying the following properties: for all $X,Y\in\mathscr{H}$,
\begin{itemize}
\item[(a)] If $X \geq Y$, then $\Sbep [X] \geq \Sbep [Y]$;
\item[(b)] $\Sbep [c]=c$;
\item[(c)] When $\Sbep [X]+\Sbep [Y]$ is not of the form $+\infty-\infty$ or $-\infty+\infty$, we have $\Sbep [X+Y] \leq \Sbep [X]+\Sbep [Y]$;
\item[(d)] For $\lambda>0$, we have $\Sbep [\lambda X]=\lambda\Sbep [X]$. \end{itemize}
Here, $\bar{\mathbb{R}}=[-\infty,\infty]$, and $0\cdot\infty$ is defined as $0$. The triple $(\Omega,\mathscr{H},\Sbep)$ is called a sub-linear expectation space. Given a sub-linear expectation $\Sbep$, $\cSbep$ denotes its conjugate expectation, i.e., $$ \cSbep [X]:=-\Sbep [-X],\quad\forall X\in\mathscr{H}. $$
\end{definition}
\begin{definition}
\begin{itemize}
\item[(i)](Identical distribution) Let $\bm{X}_1$ and $\bm{X}_2$ be two $n$-dimensional random vectors defined on sub-linear expectation spaces $(\Omega_1,\mathscr{H}_1,\Sbep_1)$ and $(\Omega_2,\mathscr{H}_2,\Sbep_2)$, respectively. If for any $\varphi\in C_{b,Lip}(\mathbb{R}^n)$ we have
$$ \Sbep_1[\varphi(\bm{X}_1)]=\Sbep_2[\varphi(\bm{X}_2)], $$
then they are said to be identically distributed, denoted by $\bm{X}_1\overset{d}{=}\bm{X}_2$. If $X_i\overset{d}{=}X_1$ for each $i\geq 1$, then the sequence of random variables $\{X_n;n\geq 1\}$ is said to be identically distributed.
 \item[(ii)](Independence) In a sub-linear expectation space $(\Omega,\mathscr{H},\Sbep)$, a random vector $\bm{Y}=(Y_1,\cdots,Y_n)$, $Y_i\in\mathscr{H}$ is said to be independent of another random vector $\bm{X}=(X_1,\cdots,X_m)$, $X_i\in\mathscr{H}$ under $\Sbep$, if for every test function $\varphi\in C_{b,Lip}(\mathbb{R}^m\times\mathbb{R}^n)$ we have
     $$\Sbep [\varphi(\bm{X},\bm{Y})]=\Sbep [\Sbep [\varphi(\bm{x},\bm{Y})]\vert_{\bm{x}=\bm{X}}].$$

  If for each $i\geq 1$, $X_{i+1}$ is independent of $(X_1,\cdots,X_i)$, then the sequence of random variables $\{X_n;n\geq 1\}$ is said to be independent.
\end{itemize}
\end{definition}
\begin{remark} In the definitions of identical distribution and independence, the range of test functions $\varphi\in C_{b,Lip}(\mathbb{R}^n)$ here is slightly different from Peng's \cite{Peng2008a,Peng2019} $\varphi\in C_{l,Lip}(\mathbb{R}^n)$. This is to ensure that the sub-linear expectations of the involved functions exist, avoiding the discussion of existence of expectations.
\end{remark}

  Let $(\Omega, \mathscr{H}, \Sbep)$ be a sub-linear expectation space. We assume that $(\Capc,\cCapc)$ is a pair of capacities with the following properties:
\begin{align}
\text{For any } f\leq I_A\leq g,\; f,g\in\mathscr{H}\text{ and } A\in\mathcal{F} \text{ we have } \;\Sbep [f]\leq\Capc(A)\leq\Sbep [g], \label{eq1.3} \end{align}
$$ \Capc \text{ is sub-additive, i.e., for any } A, B\in \mathcal F, \text{ we have } \Capc(A\bigcup B)\le \Capc(A)+\Capc(B), $$
and $\cCapc(A):=1-\Capc(A^c)$, $A\in\mathcal{F}$. We call $\Capc$ and $\cCapc$ the upper capacity and lower capacity, respectively.

Furthermore, we define the Choquet integrals/expectations $(C_{\Capc}, C_{\cCapc})$ by the formula:
$$
C_V[X]=\int_0^{\infty}V(X\geq t)\dif t+\int_{-\infty}^0[V(X\geq t)-1]\dif t, $$
where $V$ is replaced by $\Capc$ and $\cCapc$, respectively. If $\Capc$ on the sub-linear expectation space $(\Omega,\mathscr{H},\Sbep)$ and $\widetilde{\mathbb{V}}$ on the sub-linear expectation space $(\widetilde{\Omega},\widetilde{\mathscr{H}},\widetilde{\mathbb{E}})$ are two capacities satisfying property (\ref{eq1.3}), then for any random variables $X\in\mathscr{H}$ and $\widetilde{X}\in\widetilde{\mathscr{H}}$ satisfying $X\overset{d}{=}\widetilde{X}$, we have
$$
\Capc(X\geq x+\epsilon)\leq\widetilde{V}(\widetilde{X}\geq x)\leq\Capc(X\geq x-\epsilon)\quad \forall \enspace x,\epsilon>0$$
and
$
C_{\Capc}[X]=C_{\widetilde{\Capc}}[\widetilde{X}].
$

Usually, we choose $(\Capc,\cCapc)$ as
\begin{align}\label{eq1.5}
\widehat{\Capc}(A):=\inf\{\Sbep [\xi]:I_A\leq\xi,\xi\in\mathscr{H}\},\quad \widehat{\cCapc}(A)=1-\widehat{\Capc}(A^c), \enspace\forall A\in\mathcal{F}.
\end{align}
When there exists a family of probability measures $\mathscr{P}$ on $(\Omega,\mathscr{F})$ such that
\begin{align}\label{eq1.7} \Sbep[X]=\sup_{P\in \mathscr{P}}E_P[X]=:\sup_{P\in \mathscr{P}}\int XdP,
\end{align}
$\Capc$ can be defined as
\begin{align}\label{eq1.6} \Capc(A)=\sup_{P\in \mathscr{P}}P(A).
\end{align}
We denote this capacity by $\Capc^{\mathscr{P}}$, and $\cCapc^{\mathscr{P}}(A)=1-\Capc^{\mathscr{P}}(A)$. Obviously, $\Capc^{\mathscr{P}}$ is countably sub-additive, i.e., for any $A_n\in \mathscr{F}$, $\Capc^{\mathscr{P}}\big(\bigcup_{n=1}^{\infty}A_n\big)\le \sum_{n=1}^{\infty}\Capc^{\mathscr{P}}(A_n)$.
Since $\widehat{\Capc}$ may not be countably sub-additive, the Borel-Cantelli lemma does not hold. We consider its countably sub-additive extension $\outCapc$, defined by
\begin{align}
\outCapc(A):=\inf\bigg\{\sum_{n=1}^{\infty}
\widehat{\Capc}(A_n):A\subset\bigcup_{n=1}^{\infty}A_n\bigg\},
\outcCapc(A)=1-\outCapc(A^c),\quad A\in\mathcal{F}.
\end{align}
If $\Sbep$ is of the form (\ref{eq1.7}), then
$$ \Capc^{\mathscr{P}}(A)=\sup_{P\in \mathscr{P}}P(A)\le \outCapc(A)\le \upCapc(A), \;\; A\in\mathcal{F}. $$

As shown in Zhang \cite{Zhang2016}, $\outCapc$ is countably sub-additive, and $\outCapc(A) \leq \widehat{\Capc}(A)$. Moreover, $\widehat{\Capc}(A)$ (respectively $\outCapc$) is the largest sub-additive (respectively, countably sub-additive) capacity, i.e., if $V$ is also a sub-additive (respectively, countably sub-additive) capacity satisfying $V(A) \leq \Sbep [g]$ for any $A$ and $g$ with $I_A \leq g \in \mathscr{H}$, then $V(A) \leq \widehat{\Capc}(A)$ (respectively $V(A) \leq \outCapc(A)$).

 Finally, we introduce the (CC) (Countable-dimensional Compactness) condition proposed in Zhang \cite{Zhang2021}. A sub-linear expectation $\Sbep$ is said to satisfy condition (CC) if
\begin{align} \label{eqexpressbyP}
\Sbep [X]=\sup_{P\in\mathscr{P}}P[X],\enspace X\in\mathscr{H}_b=:\{X\in\mathscr{H} : X\enspace \text{is bounded}\},
\end{align}
where $\mathscr{P}$ is a family of probability measures on $(\Omega,\sigma(\mathscr{H}))$ that is countably-dimensional weakly compact, i.e., for any $Y_1, Y_2,\cdots\in\mathscr{H}_b$ and any sequence of probability measures $\{P_n\}\subset\mathscr{P}$, there exists a subsequence $\{n_k\}$ and a probability measure $P\in\mathscr{P}$, such that for all $\varphi\in C_{b,Lip}(\mathbb{R}^d), d\geq1$,
$$
\lim_{k\rightarrow\infty}E_{P_{n_k}}[\varphi(Y_1,\cdots,Y_d)]=E_P[\varphi(Y_1,\cdots,Y_d)].
$$
 Denote
$$\text{core}(\Sbep)=\{P: P\text{ is a probability measure on } (\Omega,\sigma(\mathscr{H})), \;\; E_P[f]\le \Sbep [f] \;\; \forall f\in \mathscr{H}_b\},$$
which is called the core of $\Sbep$.
 Zhang \cite{Zhang2021, Zhang2023, Zhang2024} proved the following lemma.
 \begin{lemma}\label{lem:0.1} The following are equivalent:
 \begin{description}
          \item[\rm (i)] Condition (CC) holds for some $\mathscr{P}$;
          \item[\rm (ii)] Condition (CC) holds for $\text{core}(\Sbep)$;
          \item[\rm (iii)] $\Sbep$ is regular on $\mathscr{H}_b$, i.e., whenever $\mathscr{H}_b\ni \varphi_n\searrow 0$, we have $\Sbep [\varphi_n]\searrow 0$.
        \end{description}
        Moreover, condition (CC) holds for $\mathscr{P}$ if and only if $\mathscr{P}$ satisfies (\ref{eqexpressbyP}), and for any $Y_i\in \mathscr{H}$, $i=1,2,\ldots$, $\mathscr{P}\bm Y^{-1}$ is a weakly compact family of probability measures on $\mathbb R^{\mathbb N}$, where $\bm Y=(Y_1,Y_2,\ldots)$.
  \end{lemma}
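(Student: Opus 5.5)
The proof runs the cycle (ii) $\Rightarrow$ (i) $\Rightarrow$ (iii) $\Rightarrow$ (ii), and then establishes the characterization in the ``moreover'' part.

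(ii) $\Rightarrow$ (i) is trivial: take $\mathscr P=\text{core}(\Sbep)$.

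For (i) $\Rightarrow$ (iii), let $\mathscr H_b\ni\varphi_n\searrow 0$ and suppose $\Sbep[\varphi_n]\ge\epsilon>0$ for all $n$. By (\ref{eqexpressbyP}) we can choose $P_n\in\mathscr P$ with $E_{P_n}[\varphi_n]\ge \epsilon/2$. Countably-dimensional weak compactness, applied to the countable family $\{\varphi_m\}$, gives a subsequence $P_{n_k}$ and some $P\in\mathscr P$ with $E_{P_{n_k}}[\psi(\varphi_1,\dots,\varphi_d)]\to E_P[\psi(\varphi_1,\dots,\varphi_d)]$ for every $\psi\in C_{b,Lip}$. Fix $m$. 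For $n_k\ge m$ monotonicity gives $E_{P_{n_k}}[\varphi_m]\ge E_{P_{n_k}}[\varphi_{n_k}]\ge\epsilon/2$. Since $\varphi_m$ is bounded, we may realize it as a bounded Lipschitz function of the coordinate $\varphi_m$, so $E_P[\varphi_m]\ge \epsilon/2$. Letting $m\to\infty$, monotone convergence under the single probability $P$ gives $E_P[\varphi_m]\to 0$, a contradiction.

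The main step is (iii) $\Rightarrow$ (ii).

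First, representation. By Hahn–Banach, for each $X\in\mathscr H_b$ there is a linear functional $L\le\Sbep$ on $\mathscr H_b$ with $L[X]=\Sbep[X]$. Such an $L$ is monotone and normalized, and regularity (iii) makes it a Daniell integral: if $\varphi_n\searrow0$ then $L[\varphi_n]\le\Sbep[\varphi_n]\to0$. The lattice property needed for Daniell–Stone holds because $\mathscr H_b$ is stable under $C_{l,Lip}$ maps, for example $\max$ and $\min$. Daniell–Stone then yields a probability $P$ on $\sigma(\mathscr H)$ with $E_P=L$ on $\mathscr H_b$, so $P\in\text{core}(\Sbep)$. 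Hence $\Sbep=\sup_{P\in\text{core}}E_P$ on $\mathscr H_b$.

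Second, compactness. Given $Y_1,Y_2,\ldots\in\mathscr H_b$, consider the image laws $P\circ\bm Y^{-1}$ on $\mathbb R^{\mathbb N}$. They are tight, since each coordinate is bounded, so the product of intervals is compact. By Prokhorov, any sequence $P_n\circ\bm Y^{-1}$ has a weakly convergent subsequence with limit $\mu$. The remaining task is to produce $P\in\text{core}(\Sbep)$ with $P\circ\bm Y^{-1}=\mu$. Define $L$ on the subspace $\{\psi(Y_1,\dots,Y_d)\}$ by $L[\psi(\bm Y)]=\int\psi\,d\mu$. This is well defined and satisfies $L\le\Sbep$, because each $E_{P_{n_k}}\le\Sbep$ and we can pass to the limit. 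Extend $L$ by Hahn–Banach to all of $\mathscr H_b$, dominated by $\Sbep$; it is again a Daniell integral by (iii). The resulting $P$ lies in the core and has the prescribed finite-dimensional laws.

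The delicate point is the well-definedness of $L$: if $\psi(\bm Y)=\psi'(\bm Y)$ pointwise, then $\int\psi\,d\mu=\int\psi'\,d\mu$. To see this, note that $E_{P_{n_k}}[\psi(\bm Y)-\psi'(\bm Y)]=0$ for every $k$, and the convergence passes to $\mu$ because $\psi,\psi'$ are bounded Lipschitz.

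For the ``moreover'' part, the same two arguments apply to a general $\mathscr P$. If $\mathscr P$ satisfies (\ref{eqexpressbyP}) and every $\mathscr P\bm Y^{-1}$ is weakly compact, then (CC) follows directly from the definitions. Conversely, (CC) gives (\ref{eqexpressbyP}) by definition. Relative compactness of $\mathscr P\bm Y^{-1}$ for unbounded $Y_i\in\mathscr H$ needs tightness, which we get from $\sup_P P(|Y_i|>c)\le\Sbep[(|Y_i|\wedge (c+1)-c)^+]\to0$ by regularity. Closedness follows from the subsequence property, after truncating the $Y_i$ via bounded Lipschitz maps.
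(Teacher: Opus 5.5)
Your proof is correct and self-contained. The paper itself gives no proof of this lemma and simply cites Zhang's earlier work, so the natural point of comparison is Step 2 in the proof of Proposition~\ref{prop:2}. There the identity $\widetilde{\mathscr{P}}_{\max}=\text{core}(\Sbep)\bm Y^{-1}$ is obtained by a convex separation argument. That argument takes the lemma as input: it uses weak compactness, hence tightness, of $\text{core}(\Sbep)\bm Y^{-1}$ to extend the equality of suprema from cylinder Lipschitz functions to all of $C_b(\mathbb R^{\mathbb N})$, and then separates a point from the compact convex set.

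Your lifting step works differently. You define $L$ on cylinder functions by $L[\psi(\bm Y)]=\int\psi\,d\mu$, extend by Hahn--Banach under $\Sbep$, and realize the extension as a probability via Daniell--Stone and regularity. This does the same job directly from (iii), with no compactness input and no separation theorem. Moreover, well-definedness of $L$ follows from domination alone: if $\psi(\bm Y)=\psi'(\bm Y)$ then $\int(\psi-\psi')\,d\mu\le\Sbep[0]=0$, and symmetrically. So you never need $\mu$ to be a limit of core measures. Your construction therefore shows that every $\mu\in\widetilde{\mathscr{P}}_{\max}$ equals $P\bm Y^{-1}$ for some $P\in\text{core}(\Sbep)$. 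That gives (\ref{eq:identity}) and the closedness part of the compactness of $\text{core}(\Sbep)\bm Y^{-1}$ at once. The separation route stays entirely on $\mathbb R^{\mathbb N}$ but consumes the compactness this lemma supplies; yours works on $\Omega$ and produces it.

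One small slip: $(|Y_i|\wedge(c+1)-c)^+$ dominates $I\{|Y_i|\ge c+1\}$, not $I\{|Y_i|>c\}$. The tightness bound should therefore read $\sup_{P\in\mathscr P}P(|Y_i|\ge c+1)\le\Sbep[(|Y_i|\wedge(c+1)-c)^+]$. This is harmless. Tightness of $\mathscr P\bm Y^{-1}$ on $\mathbb R^{\mathbb N}$ then follows from the compact set $K=\prod_i[-c_i,c_i]$, with $c_i$ chosen so that $\sup_{P}P(|Y_i|\ge c_i)\le\epsilon 2^{-i}$.
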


  It should be pointed out that $\sup_{P\in\mathscr{P}_1}E_P[X] =\sup_{P\in\mathscr{P}_2}E_P[X]$ ($\forall X\in \mathscr{H}_b$) does not imply $\sup_{P\in\mathscr{P}_1}P(A) =\sup_{P\in\mathscr{P}_2}P(A)$ ($\forall A\in\sigma(\mathscr{H})$).

\par In this paper, for real numbers $x$ and $y$, we write $x\vee y=\max\{x,y\}$, $x\wedge y=\min\{x,y\}$, $x^+=x\vee 0$, and $x^-=(-x)\vee 0$. For a random variable $X$, we denote its truncation by $(-c)\vee X\wedge c$ as $X^{(c)}$. If the limit exists, we define $\breve{\mathbb E}[X]=\lim_{c\rightarrow\infty}\Sbep [X^{(c)}]$, and $\breve{\mathcal E}[X]=-\breve{\mathbb E}[-X]$. Clearly, if $C_{\upCapc}(|X|)<\infty$, then $\breve{\mathbb E}[X]$, $\breve{\mathcal E}[X]$, and $\breve{\mathbb E}[|X|]$ are well-defined, and $\breve{\mathbb E}[X], \breve{\mathcal E}[X]\le \breve{\mathbb E}[|X|]\le C_{\upCapc}(|X|)$.

   \section{Law of the Iterated Logarithm}\label{sectLIL}
\setcounter{equation}{0}

 Let $\{Y_n; n\ge 1\}$ be a sequence of independent and identically distributed random variables on the sub-linear expectation space $(\Omega,\mathscr{H},\Sbep)$, with capacity $\Capc$ satisfying (\ref{eq1.3}), and let $S_n=\sum_{i=1}^nY_i$. Denote $\overline{\sigma}^2=\breve{\mathbb E}[Y_1^2]=\lim_{c\to \infty}\Sbep[Y_1^2\wedge c]$, $\underline{\sigma}^2=\breve{\mathcal E}[Y_1^2]=\lim_{c\to \infty}\cSbep[Y_1^2\wedge c]$. $[\underline{\sigma},\overline{\sigma}]$ is called the standard deviation interval of $Y_1$. Zhang \cite{Zhang2021,Zhang2022} obtained the following law of the iterated logarithm.

   \begin{theorem} \label{thLIL1}
Assume
  \begin{align}\label{eqLILmomentcondition1} C_{\Capc}\bigg[\frac{Y_1^2}{\log\log|Y_1|}\bigg]<\infty,
    \end{align}
    \begin{align}\label{eqLILmeanzerocondition}\breve{\mathbb E}[Y_1]=\breve{\mathbb E}[-Y_1]=0, \end{align}
    \begin{align}\label{eqLILmomentcondition2}\overline{\sigma}^2<\infty. \end{align}
 Then
\begin{align}\label{eqthLILiid.3}
\outCapc\bigg( \limsup_{n\to \infty}\frac{|S_n|}{\sqrt{2n\log\log n}}> \overline{\sigma} \bigg)=0,
\end{align}
\begin{align}\label{eqthLILiid.4}\outcCapc\bigg([-\overline{\sigma}, \;\;\overline{\sigma}]\supset Cl\Big\{\frac{S_n}{\sqrt{2n\log\log n}}\Big\} \supset
[-\underline{\sigma}, \;\;\underline{\sigma}]\bigg)=1,
\end{align}
where $Cl \{x_n\}$ denotes the set of limit points (cluster points) of $\{x_n\}$ in $\mathbb R$.

Furthermore, assume (\ref{eqLILmomentcondition1}), (\ref{eqLILmeanzerocondition}), (\ref{eqLILmomentcondition2}), and
   \begin{align}\label{eqLILmomentcondition3} \underline{\sigma}^2>0,
    \end{align}
and condition (CC) is satisfied. Then for $V=\Capc^{\mathscr{P}}$ or $\outCapc$, we have
 \begin{align} \label{eq:LILcompact}
V\bigg( Cl \bigg\{\frac{S_n}{\sqrt{2n\log\log n}}\bigg\}=[-\sigma, \sigma]\bigg)=\begin{cases}
  1, & \text{if } \sigma\in [\underline{\sigma},\overline{\sigma}],\\
  0, & \text{if } \sigma\not\in [\underline{\sigma},\overline{\sigma}].
  \end{cases}
\end{align}

Conversely, if condition (CC) is satisfied and for $V=\Capc^{\mathscr{P}}$ or $\outCapc$,
$$V\bigg( \limsup_{n\to \infty} \frac{|S_n|}{\sqrt{2n\log\log n}}=\infty\bigg)<1, $$
then (\ref{eqLILmomentcondition1})-(\ref{eqLILmomentcondition2}) hold.
\end{theorem}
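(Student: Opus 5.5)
Theorem \ref{thLIL1} is quoted from Zhang \cite{Zhang2021,Zhang2022}, so my plan is to reconstruct the standard route used there. I will argue in four steps: truncation and exponential bounds for the upper half, a Borel--Cantelli argument for the lower half of the cluster set, a construction of measures in the core for the compact law, and a contrapositive Borel--Cantelli argument for the converse.

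\textbf{Upper bound (\ref{eqthLILiid.3}).} I truncate $Y_i$ at level $c_n\approx \sqrt{n}/(\log\log n)^{\kappa}$ along the geometric blocks $n_k=\lfloor\theta^k\rfloor$. Condition (\ref{eqLILmomentcondition1}) makes the tail parts negligible. To see this, I bound $\sum_k \outCapc(\max_{n\le n_{k+1}}|Y_n|>c_{n_k})$ through the Choquet integral and apply the countable sub-additivity of $\outCapc$. For the bounded parts, the mean-zero condition (\ref{eqLILmeanzerocondition}) shows the centering error is $o(\sqrt{n\log\log n})$. Kolmogorov-type exponential inequalities under sub-linear expectation, proved by iterating the independence definition on $e^{tY}$, then give
\[
\upCapc\Big(\max_{m\le n_k}S_m\ge (1+\epsilon)\overline{\sigma}\sqrt{2n_k\log\log n_k}\Big)\le C k^{-(1+\epsilon)}.
\]
Summing over $k$ and using the Borel--Cantelli lemma for $\outCapc$ yields (\ref{eqthLILiid.3}). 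The same argument applied to $-Y_i$ handles the lower side.

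\textbf{Cluster set (\ref{eqthLILiid.4}).} The outer inclusion follows from the upper bound. For the inner inclusion, I take $n_k=k^k$ and the independent block increments $S_{n_k}-S_{n_{k-1}}$. A lower exponential bound, from a Kolmogorov lower inequality or a sub-linear CLT via $\cSbep$ with variance at least $\underline{\sigma}^2$, gives
\[
\cCapc\big(S_{n_k}-S_{n_{k-1}}\ge (1-\epsilon)\, x\sqrt{2n_k\log\log n_k}\big)\ge k^{-(1-\epsilon')}
\]
for each $|x|\le\underline\sigma$. Then I need the converse Borel--Cantelli lemma for independent events under $\outcCapc$. This is done by writing $\outcCapc(\bigcap_{k\ge m} A_k^c)\le\prod_k(1-\cCapc(A_k))=0$, which uses independence through bounded Lipschitz approximations of indicators. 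Applying this over a countable dense set of $x$ gives $[-\underline\sigma,\underline\sigma]\subset Cl$ with lower capacity one.

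\textbf{Compact law (\ref{eq:LILcompact}) and the converse.} For $\sigma\in[\underline\sigma,\overline\sigma]$, I use (CC) and Lemma \ref{lem:0.1} to construct $P\in\mathscr P$. Under $P$, the $Y_i$ are independent with a fixed law of variance exactly $\sigma^2$ and mean zero. I obtain $P$ by choosing one-dimensional laws in the core that attain $\sigma^2$, forming their product, and passing to a weak limit. The classical Hartman--Wintner--Strassen law under $P$ then gives probability one, so $V\ge\Capc^{\mathscr P}=1$. For $\sigma\notin[\underline\sigma,\overline\sigma]$, the case $\sigma>\overline\sigma$ is excluded by (\ref{eqthLILiid.3}). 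The case $\sigma<\underline\sigma$ is excluded by (\ref{eqthLILiid.4}) through $V\le 1-\outcCapc(\cdot)$. For the converse, if $C_{\Capc}[Y_1^2/\log\log|Y_1|]=\infty$, then $\sum_n \Capc(|Y_n|\ge M\sqrt{n\log\log n})=\infty$. By independence and (CC), the converse Borel--Cantelli lemma forces $|Y_n|$ to exceed this level infinitely often with capacity one, hence $\limsup=\infty$, a contradiction. The finiteness of the moments then gives (\ref{eqLILmomentcondition2}). The mean-zero condition follows because a nonzero $\breve{\mathbb E}[Y_1]$ would give linear drift under some $P$, again a contradiction.

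\textbf{Main obstacle.} The hardest step is the lower-bound part: the converse Borel--Cantelli lemma and the construction of a product measure in $\mathscr P$ with exact variance $\sigma^2$. Independence under $\Sbep$ does not give independence under individual $P$, so I would first attempt a direct construction via (CC) compactness of the finite-dimensional products.
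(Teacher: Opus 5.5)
The paper does not prove this theorem; it only quotes it from Zhang's earlier work. Measured against the tools it reproduces (Lemmas \ref{lem:ExpIneq} and \ref{lem2}, the block splitting in the proof of Proposition \ref{prop:1}, and Steps 2--4 in the proof of Proposition \ref{prop:2}), your outline has three genuine gaps.

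\textbf{Truncation.} Take $c_n\approx\sqrt n/(\log\log n)^{\kappa}$. Your bound $\sum_k n_{k+1}\Capc(|Y_1|>c_{n_k})$ for $\sum_k\outCapc(\max_{n\le n_{k+1}}|Y_n|>c_{n_k})$ is comparable to $C_{\Capc}[Y_1^2(\log\log|Y_1|)^{2\kappa}]$. Classically the probabilities themselves are of this order, and (\ref{eqLILmomentcondition1}) controls this quantity only for $\kappa\le-1/2$, i.e.\ for truncation at level $\delta\sqrt{n\log\log n}$. Likewise, Lemma \ref{lem2}(iii) gives the centering bound $n\breve{\mathbb E}[(|Y_1|-c_n)^+]=o(\sqrt{n\log\log n})$ only at that level. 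At that level, however, a Kolmogorov/Bernstein bound $\exp\{-x^2/(2(xc_n+B_n))\}$ with $x=(1+\epsilon)\overline\sigma\sqrt{2n\log\log n}$ is a fixed constant $\exp\{-O(1/\delta)\}$; it gives the constant $\overline\sigma$ only if $c_n=o(\sqrt{n/\log\log n})$. So no $\kappa$ works, and the range between $\sqrt{n/\log\log n}$ and $\delta\sqrt{n\log\log n}$ is left uncontrolled.

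The missing tool is the Fuk--Nagaev-type second inequality of Lemma \ref{lem:ExpIneq}, applied to the untruncated $Y_i$ with $y=\delta\sqrt{2n_k\log\log n_k}$. Its term $\{A_n(p,y)/y^p\}^{\delta x/(10y)}$ is summable over blocks by Lemma \ref{lem2}(ii), and this is exactly where (\ref{eqLILmomentcondition1}) is enough. The inner inclusion in (\ref{eqthLILiid.4}) has the same problem, since that middle range must be negligible off an $\outCapc$-null set (Zhang uses $Z_j=Z_{j,1}+Z_{j,2}$ and (\ref{eq:prop:1.15})--(\ref{eq:prop:1.17})). A sub-linear CLT alone also does not reach the moderate-deviation scale you need. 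Two smaller points: your product bound must be for the upper capacity $\outCapc(\bigcap_{k\ge m}A_k^c)$, not $\outcCapc$; and one-sided events give only $\limsup\ge\underline\sigma$, so interior points must come from $Y_n/\sqrt{2n\log\log n}\to0$.

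\textbf{Converse.} ``Finiteness of the moments then gives (\ref{eqLILmomentcondition2})'' does not follow, because (\ref{eqLILmomentcondition1}) does not imply $\overline\sigma^2<\infty$; classically, $E[Y^2/\log\log|Y|]<\infty$ allows $EY^2=\infty$. You need a separate Strassen--Feller-type argument showing that $\overline\sigma^2=\infty$ forces $\limsup|S_n|/\sqrt{2n\log\log n}=\infty$ with capacity one.

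\textbf{Compact law.} Your construction for the ``$=1$'' case of (\ref{eq:LILcompact}) does not produce an element of $\mathscr P$. A product of one-dimensional core laws lies in $\widetilde{\mathscr P}_{\max}=\text{core}(\Sbep)\bm Y^{-1}$ (this is what the induction in Step 3 together with (\ref{eq:identity}) gives), but $\mathscr P$ can be much smaller. For example, let the one-dimensional core be the convex hull of $N(0,\underline\sigma^2)$ and $N(0,\overline\sigma^2)$, and let $\mathscr P$ be the closure of the extreme points of the core; (CC) still holds. An extreme point cannot have a mixed first marginal, since you can split that marginal and keep the conditional kernel. Hence under every $P\in\mathscr P$ the law of $Y_1$ is one of the two normals, and no $P\in\mathscr P$ makes the $Y_i$ i.i.d.\ with variance in $(\underline\sigma^2,\overline\sigma^2)$. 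Weak limits cannot repair this, because an almost-sure statement about $Cl\{S_n/\sqrt{2n\log\log n}\}$ is not preserved under weak convergence. Moreover, at $\sigma=\overline\sigma$ a core law with second moment exactly $\overline\sigma^2$ need not exist, since (\ref{eqLILmomentcondition1}) does not make $Y_1^2$ uniformly integrable over the core.

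The paper's mechanism avoids both problems. It works with the truncated $Z_j=Y_j^{(b_j)}$, whose extremal moments $\Sbep[Z_j^2]$ and $\cSbep[Z_j^2]$ are attained, mixes them with $j$-dependent weights, and builds $Q\in\text{core}(\Sbep)$ with prescribed conditional variances (Step 3). It then transfers the conclusion to some $P\in\mathscr P$ through the bound $\sum_kE[f_k(\bm Y)]\le M$ on bounded continuous functionals, which does survive weak limits (Step 4). Finally, it obtains the LIL from the self-normalized LIL that holds under every $P\in\mathscr P$ (Proposition \ref{prop:1}), not from the classical i.i.d.\ theorem. At best, your construction yields the ``$=1$'' case for $V=\outCapc$ with $\sigma<\overline\sigma$, and your drift argument in the converse has the same transfer problem for $V=\Capc^{\mathscr P}$. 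Your reduction of the ``$=0$'' case to (\ref{eqthLILiid.3}) and (\ref{eqthLILiid.4}) is fine.
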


The purpose of this paper is to establish the limit form of the law of the iterated logarithm as in (\ref{eq:chenLIL}), and to show that $\sigma$ in (\ref{eq:LILcompact}) can be a random variable. The following is our main theorem.
 \begin{theorem}\label{thLIL2} Assume (\ref{eqLILmomentcondition1}), (\ref{eqLILmeanzerocondition}), (\ref{eqLILmomentcondition2}), and condition (CC) holds for the family of probability measures $\mathscr{P}$. Then
 \begin{description}
   \item[\rm (i)] For any continuous function $\sigma(x_1,x_2,\ldots)$ on $\mathbb R^{\mathbb N}$ taking values in $[\underline{\sigma},\overline{\sigma}]$, there exists $P\in\mathscr{P}$ such that
 \begin{align} \label{eq:thLIL2:1}
 P\bigg(Cl\bigg\{\frac{S_n}{\sqrt{2n\log\log n}}\bigg\}=\Big[-\sigma(Y_1,Y_2,\ldots),\sigma(Y_1,Y_2,\ldots)\Big]\bigg)=1,
 \end{align}
 \begin{align}\label{eq:thLIL2:2}
 &P\bigg(\lim_{n\to \infty} \max_{k\le n} \frac{S_k}{\sqrt{2k\log\log n}}=\sigma(Y_1,Y_2,\ldots) \nonumber\\
 & \qquad \text{and } \lim_{n\to \infty} \min_{k\le n} \frac{S_k}{\sqrt{2k\log\log n}}=-\sigma(Y_1,Y_2,\ldots)\bigg)=1;
  \end{align}
   \item[\rm (ii)] For any finite-dimensional Borel-measurable function $\sigma(x_1,\ldots,x_d)$ taking values in $[\underline{\sigma},\overline{\sigma}]$, there exists $P\in\text{core}(\Sbep)$ such that (\ref{eq:thLIL2:1}) and (\ref{eq:thLIL2:2}) hold for $\sigma(x_1,x_2,\ldots)=\sigma(x_1,\ldots,x_d)$.
 \end{description}

 It is easy to see that in (i), replacing $P$ by $\Capc^{\mathscr{P}}$ or $\outCapc$, (\ref{eq:thLIL2:1}) and (\ref{eq:thLIL2:2}) hold; in (ii), replacing $P$ by $\Capc^{\text{core}(\Sbep)}$ or $\outCapc$, (\ref{eq:thLIL2:1}) and (\ref{eq:thLIL2:2}) hold.

 \end{theorem}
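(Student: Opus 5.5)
Under a suitable $P$, I will realize $\{Y_n\}$ as a martingale-difference sequence whose conditional variance at step $n$ is (close to) $\sigma^2(Y_1,\ldots,Y_{n-1},\ldots)$. Then I apply a classical martingale Chen-type LIL, conditionally on the random level. The construction uses condition (CC) and Lemma \ref{lem:0.1}. Because the $Y_i$ are independent and identically distributed under $\Sbep$, for any $x\in\mathbb R^{i-1}$ and any $s\in[\underline{\sigma},\overline{\sigma}]$ there is a distribution $\mu_s$ on $\mathbb R$ such that $\mu_s[\varphi]\le \Sbep[\varphi(Y_1)]$ for bounded Lipschitz $\varphi$. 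By weak compactness and convexity of $\{P Y_1^{-1}: P\in\mathscr P\}$, and using (\ref{eqLILmeanzerocondition}) together with the definition of $\underline{\sigma},\overline{\sigma}$, I will choose $\mu_s$ with mean (approximately) zero and with second moment (after truncation at a level $c_n\uparrow\infty$) approximately $s^2$. The choice will be measurable in $s$, by mixing the extremal laws that attain $\overline{\sigma}^2$ and $\underline{\sigma}^2$.

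Step 1 (construction). I will define a kernel $Q(\dif y_i\mid y_1,\ldots,y_{i-1})=\mu_{s_i}$ with $s_i$ chosen from the past. In case (ii), set $s_i=\sigma(y_1,\ldots,y_d)$ for $i>d$, and take the first $d$ coordinates from any element of $\text{core}(\Sbep)$. This gives a law $\widetilde P$ on $\mathbb R^{\mathbb N}$. Independence under $\Sbep$ gives $E_{\widetilde P}[\varphi(Y_1,\ldots,Y_n)]\le\Sbep[\varphi(Y_1,\ldots,Y_n)]$ by iterating the independence identity. By (CC) and Lemma \ref{lem:0.1}, a measure $P$ on $\Omega$ with $P\bm Y^{-1}=\widetilde P$ then exists in $\text{core}(\Sbep)$: the law is a limit of finite-dimensional approximations, and weak compactness of $\mathscr P\bm Y^{-1}$ lifts it. In case (i), $\sigma$ depends on the whole sequence, so the kernel is not adapted. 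Here I approximate $\sigma$ by $\sigma_m(x)=\sigma(x_1,\ldots,x_m,0,0,\ldots)$. Continuity gives $\sigma_m\to\sigma$ pointwise, and I will use blocks: on block $m$, indices in $(n_{m-1},n_m]$ with $n_m$ growing superexponentially, set $s_i=\sigma_{m}(Y_1,\ldots,Y_{m})$. Choosing $n_m\ge$ something ensuring $Y_1,\ldots,Y_m$ are already fixed makes the kernel adapted. I then obtain $P\in\mathscr P$ rather than in the core by taking a limit of measures in $\mathscr P$ that realize each finite-dimensional marginal (with $\sup$ attained by compactness), and using closedness of $\mathscr P\bm Y^{-1}$.

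Step 2 (LIL under $P$). Under $P$, the $Y_i$ form martingale differences with bounded-by-$\overline{\sigma}^2$ conditional variances, and $\sum_{i\le n}E[Y_i^2\mid\mathcal F_{i-1}]/n\to\sigma^2(Y)$ a.s. Truncation at $\sqrt{n/\log\log n}$ is controlled by (\ref{eqLILmomentcondition1}) through $C_{\Capc}$ and Borel–Cantelli for $\outCapc$, as in the proof of Theorem \ref{thLIL1}. I then apply Stout's martingale LIL, or a Skorokhod embedding into Brownian motion $W(T_n)$ with $T_n/n\to\sigma^2$. This yields the cluster set $[-\sigma,\sigma]$, which is (\ref{eq:thLIL2:1}). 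Chen's limit form (\ref{eq:chenLIL}) for Brownian motion, $\max_{k\le n}W(\sigma^2k)/\sqrt{2k\log\log n}\to\sigma$, transfers through the strong approximation, since the approximation error is $o(\sqrt{k\log\log k})$ and the small-$k$ terms are negligible. This gives (\ref{eq:thLIL2:2}). When $\sigma(Y)$ is random, I condition on the finitely many coordinates that determine the current level; on block $m$ the level is frozen, and the superexponential growth of $n_m$ makes earlier blocks negligible.

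The main obstacle is Step 1 in case (i). There I must produce a single $P\in\mathscr P$ (not merely in the core) with the prescribed adapted conditional laws, and simultaneously handle mean-zero under truncation, since only $\breve{\mathbb E}[\pm Y_1]=0$ is known. I would first try to get exact conditional laws inside $\text{core}(\Sbep)$, and then argue that the event in question is $\outCapc$-measurable enough that a weak limit from $\mathscr P$ preserves probability one. If that fails, I fall back on the closedness of $\mathscr P\bm Y^{-1}$ from Lemma \ref{lem:0.1} applied to the constructed finite-dimensional laws.
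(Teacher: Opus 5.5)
Your kernel construction, which mixes the extremal one-dimensional laws attaining $\Sbep[Z_d^2]$ and $\cSbep[Z_d^2]$ with weights measurable in the past and then applies Kolmogorov extension, is the paper's Step 3 in the proof of Proposition \ref{prop:2}. The step you flag as the main obstacle, producing $P\in\mathscr P$ in case (i), is a genuine gap, and neither proposed mechanism works.

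Elements of $\mathscr P$ ``realizing each finite-dimensional marginal'' need not exist. $\mathscr P\bm Y^{-1}$ is closed but need not be convex, and attaining $\Sbep[\varphi]$ for each single $\varphi$ does not let you attain a prescribed joint law. For example, take $\mu^+=\frac12(\delta_{-2}+\delta_2)$ and $\mu^-=\frac12(\delta_{-1}+\delta_1)$. Let $\mathscr P$ be the laws on $\{\pm1,\pm2\}^{\mathbb N}$ under which the conditional law of $Y_i$ given $Y_1,\ldots,Y_{i-1}$ is always $\mu^+$ or $\mu^-$. Then $\mathscr P$ is compact, $\sup_{P\in\mathscr P}E_P$ makes $\{Y_n\}$ i.i.d.\ with mean zero, (CC) holds, and $\underline{\sigma}=1$, $\overline{\sigma}=2$. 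Yet no $P\in\mathscr P$ has conditional variance $5/2$ at any step; not even the law of $Y_1$ is a mixture. The theorem for $\sigma\equiv\sqrt{5/2}$ still holds, by alternating $\mu^+$ and $\mu^-$. Your fallback fails too: $\{V_n^2/n\to\sigma^2(\bm Y)\}$ is neither open nor closed, so probability one is not preserved under weak limits.

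What is missing is a \emph{quantitative} transfer, and it changes the architecture. First, the paper proves the Chen-type LIL self-normalized by $V_n$ under \emph{every} $P\in\mathscr P$ (Propositions \ref{prop:1} and \ref{prop:3}). So only the event $\{V_n^2/n\to\sigma^2(\bm Y)\}$ must be transferred, and no conditional structure of the final $P$ is used. Your Step 2 relies on the constructed conditional variances, so it would not apply to a weak-limit measure. Second, the transfer itself goes as follows.
\begin{itemize}
\item Lemmas \ref{lem:ExpIneqM} and \ref{lem3} give $\sum_k Q(\text{bad}_k)\le M$, with $\epsilon_k$ and $M$ independent of $Q\in\text{core}(\Sbep)$.
\item The bad events are dominated by nonnegative bounded continuous cylinder functions $f_k(\bm Y)$; this is exactly where continuity of $\sigma$ is needed.
\item Since $\inf_{P\in\mathscr P}E_P[g(\bm Y)]=\inf_{P\in\text{core}(\Sbep)}E_P[g(\bm Y)]$ for such $g$, there are $P_l\in\mathscr P$ with $E_{P_l}[\sum_{k\le l}f_k(\bm Y)]\le M+1/l$.
\item A weak limit $P\in\mathscr P$ satisfies $E_P[\sum_{k\le l}f_k(\bm Y)]\le M$ for all $l$, and Borel--Cantelli then applies under $P$.
\end{itemize}

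Two further steps need repair. In case (ii), ``weak compactness of $\mathscr P\bm Y^{-1}$ lifts it'' does not show that your $\widetilde P$ is the image of an element of $\text{core}(\Sbep)$. Such an element must be dominated by $\Sbep$ on all of $\mathscr H_b$, not only on functions of $\bm Y$. What is needed is the identity $\widetilde{\mathscr P}_{\max}=\text{core}(\Sbep)\bm Y^{-1}$. The paper proves it by strong convex separation, using that $\text{core}(\Sbep)\bm Y^{-1}$ is convex and compact and that the suprema over the two families agree on $C_b(\mathbb R^{\mathbb N})$.

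In Step 2, truncation at level about $\sqrt{n/\log\log n}$ is needed for the embedding (to get $\sum_j E_P[\tau_j^2]/j^2<\infty$, Lemma \ref{lem4}). It cannot be undone by Borel--Cantelli under (\ref{eqLILmomentcondition1}), since $\sum_n\Capc(|Y_1|\ge\sqrt{n/\log\log n})<\infty$ amounts essentially to $C_{\Capc}[Y_1^2\log\log|Y_1|]<\infty$. The paper instead embeds $\sum_j(Z_j-E_P[Z_j\mid\mathcal F_{j-1}])$. It then uses the block decomposition $Z_j=Z_{j,1}+Z_{j,2}$ from Zhang's 2022 paper to show $\sum_{j\le n}\big(Y_j-Z_j+E_P[Z_j\mid\mathcal F_{j-1}]\big)=o(\sqrt{n\log\log n})$ a.s.\ under $P$.
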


  \section{Proof of the Main Results}\label{sectProof}
  \setcounter{equation}{0}

For two sequences $\{f_n;n\ge 1\}$ and $\{g_n\ge 0;n\ge 1\}$, $f_n=o(g_n)$ means $\frac{f_n}{g_n}\to 0$, $f_n=O(g_n)$ means $\limsup_n \frac{|f_n|}{g_n}<\infty$, $f_n\sim g_n$ means $\frac{f_n}{g_n}\to 1$, and $f_n\approx g_n$ means $0<\liminf_n \frac{f_n}{g_n}\le \limsup_n \frac{f_n}{g_n}<\infty$.

Let $p>2$. As in Zhang \cite{Zhang2022}, denote $t_j=\sqrt{2\log\log j}$, $b_j=\alpha_j \sqrt{j}/\sqrt{2\log\log j}$, where $\alpha_j>0$ satisfies $\alpha_j\to 0$, $b_j\nearrow \infty$, and $\alpha_j^{1-p}t_j^{-2}\to 0$. For example, we may take $\alpha_j=1/\sqrt{\log\log\log j}$. Let $V_n^2=\sum_{j=1}^n (Y_j^2\wedge b_j^2)$. Theorem \ref{thLIL2} can be obtained by combining the following two propositions when  (\ref{eqLILmomentcondition3}) is satisfied. 

\begin{proposition} \label{prop:1} Assume that there exists a family of probability measures $\mathscr{P}$ on $(\Omega,\sigma(\mathscr{H}))$ such that the sub-linear expectation $\Sbep$ satisfies (\ref{eqexpressbyP}). Assume conditions (\ref{eqLILmomentcondition1}), (\ref{eqLILmeanzerocondition}), (\ref{eqLILmomentcondition2}), and (\ref{eqLILmomentcondition3}) hold. Then for any $P\in \mathscr{P}$, we have
\begin{align}\label{eq:prop1.1} P\bigg(Cl\bigg\{ \frac{S_n}{V_n\sqrt{2\log\log n}}\bigg\}=[-1,1]\bigg)=1,
\end{align}
\begin{align}\label{eq:prop1.2} P\bigg(\lim_{n\to \infty} \max_{k\le n} \frac{S_k}{V_k\sqrt{2\log\log n}}=1\text{ and } \lim_{n\to \infty} \min_{k\le n} \frac{S_k}{V_k\sqrt{2\log\log n}}=-1\bigg)=1.
\end{align}
 \end{proposition}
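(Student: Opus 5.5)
Fix $P\in\mathscr{P}$. Under $P$ the $Y_j$ need not be independent, so the plan is to treat the self-normalized sums as martingale-type objects. Let $\mathcal F_j=\sigma(Y_1,\ldots,Y_j)$ and work with the truncated variables $\overline Y_j=(-b_j)\vee Y_j\wedge b_j$. Since $E_P[\varphi(Y_{j})|\mathcal F_{j-1}]\le \Sbep[\varphi(Y_1)]$ for bounded Lipschitz $\varphi$ (the usual consequence of independence plus (\ref{eqexpressbyP}); I will take this as standard, as in Zhang \cite{Zhang2021,Zhang2022}), we get two conditional bounds. The conditional means satisfy $|E_P[\overline Y_j|\mathcal F_{j-1}]|\le \max(|\Sbep[\overline Y_1^{(b_j)}]|,|\Sbep[-\overline Y_1^{(b_j)}]|)$, which by (\ref{eqLILmeanzerocondition}) and (\ref{eqLILmomentcondition1}) is $O(\Sbep[(|Y_1|-b_j)^+])$. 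The conditional variances satisfy $E_P[\overline Y_j^2|\mathcal F_{j-1}]\in[\cSbep[Y_1^2\wedge b_j^2],\Sbep[Y_1^2\wedge b_j^2]]$, hence they lie asymptotically in $[\underline\sigma^2,\overline\sigma^2]$.

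The first step is to reduce the statement to the martingale $M_n=\sum_{j\le n}(\overline Y_j-E_P[\overline Y_j|\mathcal F_{j-1}])$. Exactly as in the proof of (\ref{eqthLILiid.3}), condition (\ref{eqLILmomentcondition1}) gives $\sum_j \upCapc(|Y_j|>b_j)<\infty$ (via a Borel–Cantelli step for $\outCapc$, hence for $P$). It also gives $\sum_{j\le n}|E_P[\overline Y_j|\mathcal F_{j-1}]|=o(\sqrt{n/\log\log n})$, by the choice of $\alpha_j$. Consequently $S_n-M_n=o(\sqrt{n/\log\log n})$ $P$-a.s. Similarly, $V_n^2-\langle M\rangle_n=o(n)$ a.s. by a martingale SLLN applied to $\overline Y_j^2-E_P[\overline Y_j^2|\mathcal F_{j-1}]$, using $\sum E[\overline Y_j^4]/j^2\le\sum b_j^2/j^2\cdot O(1)<\infty$. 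Since $\langle M\rangle_n\ge (\underline\sigma^2-o(1))n$ by (\ref{eqLILmomentcondition3}), $V_n^2\asymp n$, and all the errors are negligible after normalization by $V_n\sqrt{2\log\log n}$.

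The second step applies Stout's martingale LIL. The increments of $M$ are bounded by $2b_j=o(\sqrt{\langle M\rangle_j/\log\log\langle M\rangle_j})$ and $\langle M\rangle_n\to\infty$. Stout's upper/lower-class results then give $\limsup M_n/\sqrt{2\langle M\rangle_n\log\log\langle M\rangle_n}=1$, and the same for $-M_n$. For the cluster set (\ref{eq:prop1.1}), I use that $\langle M\rangle_{n+1}/\langle M\rangle_n\to1$ together with the fact that $M_n/\sqrt{2\langle M\rangle_n\log\log n}$ has increments tending to $0$ (a standard argument). Combined with $\log\log\langle M\rangle_n\sim\log\log n$, this fills $[-1,1]$. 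The main obstacle is (\ref{eq:prop1.2}), Chen-type limit. The upper bound $\limsup\le1$ follows from the LIL: for $k\le n$ large, $S_k/V_k\le(1+\epsilon)\sqrt{2\log\log k}\le(1+\epsilon)\sqrt{2\log\log n}$, while small $k$ contribute $O(1)$. For the lower bound, $\liminf_n\max_{k\le n}S_k/(V_k\sqrt{2\log\log n})\ge1$, I would use the time change $\tau(t)=\min\{n:\langle M\rangle_n\ge t\}$ and Skorokhod embedding (Strassen's strong invariance for martingales with bounded-type increments). This embeds $M_{\tau(t)}=W(t)+o(\sqrt{t/\log\log t})$, reducing the claim to Chen's Brownian statement $\max_{s\le t}W(s)/\sqrt{2s\log\log t}\to1$. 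That statement holds because along geometric blocks $s_i=\theta^i$, $i\le\log t/\log\theta$, the events $\{W(s_i)-W(s_{i-1})>(1-\epsilon)\sqrt{2 s_i\log\log t}\}$ are independent with probability roughly $(\log t)^{-(1-\epsilon)^2}$ each. Hence the probability that none occurs is at most $\exp(-c(\log t)^{2\epsilon-\epsilon^2}/\log\theta)$, which is summable along $t=e^{m}$, and interpolation handles intermediate $t$. The embedding error must be uniform over $s\le t$ relative to $\sqrt{s\log\log t}$, which is where care is needed. I would handle it by only using $s\ge t^{\epsilon}$, where the error $o(\sqrt{s/\log\log s})$ suffices.

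Finally, the $\min$ statement follows by applying everything to $-Y_j$, which satisfies the same hypotheses.
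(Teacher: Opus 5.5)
\textbf{There is a genuine gap in your first step, the reduction of $S_n$ to $M_n$.} Your treatment of the new part of the statement, the limit law (\ref{eq:prop1.2}) via a Skorokhod embedding of the truncated martingale plus Chen's limit law for Brownian motion, follows the paper's route.

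\emph{The Borel--Cantelli step is false.} You claim that (\ref{eqLILmomentcondition1}) gives $\sum_j\upCapc(|Y_j|>b_j)<\infty$. Since $b_j\approx\alpha_j\sqrt{j/\log\log j}$, this series is finite only under a moment condition of order $Y_1^2\log\log|Y_1|/\alpha^2(|Y_1|)$. Lemma \ref{lem2}(i) gives summability only at the much higher level $\delta\sqrt{j\log\log j}$. The step already fails classically. Take a symmetric $Y_1$ with $P(|Y_1|>t)=1/(t^2\log t(\log\log t)^2)$ for large $t$. It has finite variance, so all hypotheses hold. Yet $\sum_jP(|Y_1|>b_j)\asymp\sum_j\log\log\log j/(j\log j\log\log j)=\infty$, so $|Y_j|>b_j$ infinitely often a.s.

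\emph{The conditional means cannot be bounded termwise.} The bound $\sum_{j\le n}\breve{\mathbb E}[(|Y_1|-b_j)^+]$ is not $o(\sqrt{n/\log\log n})$ in general. Condition (\ref{eqLILmomentcondition1}) controls $\Capc(|Y_1|\ge t)$ only in an integrated sense. With sparse jumps in $\Capc(|Y_1|\ge t)$ and asymmetric extremal laws, this sum can be of order $\sqrt{n\log\log n}/\alpha_n$ along a subsequence.

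\emph{What is actually needed.} The middle range $b_j<|Y_j|\le\delta\sqrt{j\log\log j}$ occurs infinitely often. It must be controlled jointly with the conditional means, through cancellation. You cannot simply raise the truncation level instead, because the martingale increments must stay $o(\sqrt{j/\log\log j})$ for the Stout or embedding step. The right target is also $o(\sqrt{n\log\log n})$, the size of $V_n\sqrt{2\log\log n}$, not $o(\sqrt{n/\log\log n})$. The paper gets exactly this, (\ref{eq:remainder}), from Zhang's block decomposition $Z_j=Z_{j,1}+Z_{j,2}$ over the good blocks $\mathbb N_1$. Using (\ref{eq:prop:1.15})--(\ref{eq:prop:1.17}) and the exponential inequalities of Lemmas \ref{lem:ExpIneq} and \ref{lem:ExpIneqM}, it shows that $\sum_{j\le n}(Z_{j,2}-E_P[Z_{j,2}|\mathcal F_{j-1}])$, $\sum_{j\le n}(Y_j-Z_{j,1})$ and $\sum_{j\le n}|E_P[Z_{j,1}|\mathcal F_{j-1}]|$ are all $o(\sqrt{n\log\log n})$. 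This is the missing ingredient.

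\textbf{Two smaller points.}

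First, your justification of $V_n^2-\langle M\rangle_n=o(n)$ fails as written. We have $\sum_j b_j^2/j^2=\sum_j\alpha_j^2/(2j\log\log j)=\infty$. The paper instead uses Lemma \ref{lem4}, i.e. (\ref{eqLILmomentcondition1}) again, to get $\sum_j\Sbep[Z_j^4]/j^2<\infty$. Alternatively, an exponential bound along dyadic blocks, as in Step 1 of the proof of Proposition \ref{prop:2}, also works.

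Second, you do not need an embedding error $o(\sqrt{t/\log\log t})$, and a Skorokhod or Strassen embedding would not give it here. The normalization is $\sqrt{s}\sqrt{2\log\log t}$ and $\log\log s\le\log\log t$. So $|M_k-W(\langle M\rangle_k)|=o(\sqrt{\langle M\rangle_k\log\log\langle M\rangle_k})$ suffices. The paper uses the exact embedding $M_n=W(T_n)$, with $T_n\sim U_n^2$ from martingale strong laws and $\max_{k\le n}\tau_k=o(T_n)$. It then uses the Brownian modulus of continuity to pass from $\max_{s\le T_n}W(s)/\sqrt s$ to $\max_{k\le n}W(T_k)/\sqrt{T_k}$.
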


 \begin{proposition} \label{prop:2} Assume conditions (\ref{eqLILmomentcondition1}) and (\ref{eqLILmomentcondition2}) hold.
Then
\begin{align}\label{eq:prop2.1}\outcCapc\bigg(\underline{\sigma}^2\le \liminf_{n\to \infty}\frac{V_n^2}{n}\le \limsup_{n\to \infty}\frac{V_n^2}{n}\le \overline{\sigma}^2\bigg)=1.
\end{align}
Furthermore, if condition (CC) holds for the family of probability measures $\mathscr{P}$, then for any continuous function $\varphi(\bm x)$ on $\mathbb R^{\mathbb N}$, there exists $P\in\mathscr{P}$ such that
\begin{align}\label{eq:prop2.2} P\bigg(\lim_{n\to \infty}\frac{V_n^2}{n}=
\underline{\sigma}^2\vee \varphi(Y_1,Y_2,\ldots)\wedge\overline{\sigma}^2 \bigg)=1,
\end{align}
and for any finite-dimensional Borel-measurable function $\varphi(x_1,\ldots,x_d)$, there exists $P\in\text{core}(\Sbep)$ such that (\ref{eq:prop2.2}) holds for $\varphi(x_1,x_2,\ldots)=\varphi(x_1,\ldots,x_d)$.
 \end{proposition}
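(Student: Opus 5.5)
The first assertion (\ref{eq:prop2.1}) is a strong law for the truncated squares $Z_j:=Y_j^2\wedge b_j^2$. I would prove it by a two-sided comparison with bounded i.i.d. sequences, and I expect it to be routine. The second assertion, constructing a single $P\in\mathscr{P}$ under which $V_n^2/n$ converges to a prescribed random limit, is the real content.

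\emph{Lower bound.} Fix $c>0$. Since $b_j\nearrow\infty$, we have $Z_j\ge Y_j^2\wedge c$ for all large $j$. Hence
\[
\liminf_n V_n^2/n\ \ge\ \liminf_n \tfrac1n\textstyle\sum_{j\le n}(Y_j^2\wedge c).
\]
The variables $Y_j^2\wedge c$ are bounded i.i.d. under $\Sbep$. By the strong law of large numbers under sub-linear expectations (Zhang), with $\outcCapc$-probability one the right side is at least $\cSbep[Y_1^2\wedge c]$. Letting $c\uparrow\infty$ along a countable sequence and using countable sub-additivity of $\outCapc$ gives $\liminf_n V_n^2/n\ge\underline\sigma^2$.

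\emph{Upper bound.} Write
\[
Z_j\le (Y_j^2\wedge c)+W_j^{(c)},\qquad W_j^{(c)}:=\big((Y_j^2-c)^+\wedge b_j^2\big).
\]
The first part contributes at most $\Sbep[Y_1^2\wedge c]\le\overline\sigma^2$. For the second part I would show
\[
\Sbep[W_j^{(c)}]\le \breve{\mathbb E}[(Y_1^2-c)^+]\to0 \quad\text{as } c\to\infty .
\]
I would also show
\[
\sum_j \Sbep\big[(W_j^{(c)})^2\big]/j^2\ \le\ C\sum_j \Sbep\big[Y_1^4\wedge b_j^4\big]/j^2\ <\ \infty .
\]
This summability is the standard computation that converts $b_j^2=\alpha_j^2 j/\log\log j$ and (\ref{eqLILmomentcondition1}) into a convergent series. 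A Kolmogorov-type maximal inequality for independent variables under $\Sbep$, applied on dyadic blocks, together with the Borel--Cantelli lemma for $\outCapc$, then gives $\limsup_n\frac1n\sum_{j\le n}W_j^{(c)}\le \Sbep$-mean $+\,o(1)$ quasi-surely. Letting $c\to\infty$ gives (\ref{eq:prop2.1}).

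\emph{Construction of $P$.} Set $s(\bm x)=\underline\sigma^2\vee\varphi(\bm x)\wedge\overline\sigma^2$. The idea is a blockwise adaptive choice of the conditional law of $Y_j$. Choose blocks $(n_{k-1},n_k]$ with $n_k/n_{k-1}\to\infty$. On block $k$ the target is $s_k:=s(Y_1,\dots,Y_{n_{k-1}},0,0,\dots)$, and $s_k\to s(\bm Y)$ pointwise by continuity in the product topology. On block $k$, given the past, we want the $Y_j$ to be distributed as a mixture $\lambda Q^{+}+(1-\lambda)Q^{-}$ with $\lambda=\lambda(s_k)$ continuous. Here $Q^{\pm}$ are laws of $Y_1$ under elements of $\mathscr{P}$ whose $E[Y_1^2\wedge b_j^2]$ is within $\epsilon_k$ of $\overline\sigma^2$ and $\underline\sigma^2$ respectively. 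Under such a measure, a conditional Kolmogorov strong law (martingale differences with the same second-moment bounds as above) gives $V_{n}^2/n\to s(\bm Y)$ almost surely, because within long blocks the averages track $s_k$ and $n_k/n_{k-1}\to\infty$.

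\emph{Realising this measure inside $\mathscr{P}$; the main obstacle.} The difficulty is to show that such a measure is $PY^{-1}$ for some $P\in\mathscr{P}$ and not merely in the core. I would try the following. Encode the finite-horizon requirement in a bounded continuous functional $F_m(Y_1,\dots,Y_{n_m})$ that rewards the desired block behaviour, for instance minus a truncated sum of deviations of the block averages from $s_k$. By the definition of independence, $\Sbep[F_m]$ is computed iteratively, and its value equals the value attained by the adaptive kernels above. By (CC) and Lemma \ref{lem:0.1}, $\mathscr{P}Y^{-1}$ is weakly compact, so a maximiser $P_m\in\mathscr{P}$ with $E_{P_m}[F_m]=\Sbep[F_m]$ exists. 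Passing to a subsequential limit $P\in\mathscr{P}$ across $m$, with the penalties weighted by summable tolerances $\epsilon_k$, I would aim to show that $P$ satisfies the block deviation bounds with summable failure probabilities, and hence (\ref{eq:prop2.2}).

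\emph{Finite-dimensional Borel $\varphi$.} Here continuity fails, so the compactness-limit argument in $\mathscr{P}$ breaks down. Instead I would build the measure explicitly on $\sigma(\mathscr{H})$. The first $d$ coordinates keep any core law, and the remaining coordinates use kernels depending measurably on $(Y_1,\dots,Y_d)$. I would verify $E_P[f]\le\Sbep[f]$ for $f\in\mathscr{H}_b$ by induction using the independence identity, and extend to $\sigma(\mathscr{H})$ via regularity (Lemma \ref{lem:0.1}(iii)). This gives $P\in\text{core}(\Sbep)$.
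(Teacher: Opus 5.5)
You reach the continuous case by essentially the paper's route, but the upper half of (\ref{eq:prop2.1}) and the finite-dimensional Borel case each have a genuine gap. The lower half of (\ref{eq:prop2.1}) is fine.

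\textbf{Upper bound in (\ref{eq:prop2.1}).} This rests on a false claim: $\overline{\sigma}^2=\lim_c\Sbep[Y_1^2\wedge c]<\infty$ does not give $\breve{\mathbb E}[(Y_1^2-c)^+]\to 0$ under a sub-linear expectation, even together with (\ref{eqLILmomentcondition1}). Take the marginal $\Sbep[\phi(Y_1)]=\sup\{\phi(0),E_{P_k}[\phi]:k\ge 1\}$, where $P_k$ puts mass $1/(2m_k)$ at each of $\pm\sqrt{m_k}$ and the rest at $0$, with $m_k=\exp(e^{k^2})$. Then $\overline{\sigma}^2=1$, and the Choquet integral in (\ref{eqLILmomentcondition1}) is of order $\sum_k k^{-2}<\infty$. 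Yet:
\begin{itemize}
\item $\breve{\mathbb E}[(Y_1^2-c)^+]=\sup_k (m_k-c)^+/m_k=1$ for every $c$;
\item $\Sbep[W_j^{(c)}]\to 1$ as $j\to\infty$;
\item $\Sbep[Y_1^2\wedge c]=1$ for $c\ge m_1$.
\end{itemize}
So your bound only gives $\limsup_n V_n^2/n\le 2$, not $\le\overline{\sigma}^2$. The loss comes from the splitting itself: $\Sbep$ is only sub-additive, and the two pieces are maximised by different measures. The remedy is not to split. Apply your block maximal inequality (or Lemma \ref{lem:ExpIneq}) directly to the bounded independent variables $Y_j^2\wedge b_j^2$, whose upper means satisfy $\Sbep[Y_1^2\wedge b_j^2]\le\overline{\sigma}^2$. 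This is the computation in Step 1 of the paper, which itself simply cites Zhang for (\ref{eq:prop2.1}).

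\textbf{Continuous $\varphi$ in (\ref{eq:prop2.2}).} Your plan is essentially the paper's Steps 1, 3 and 4: mixtures of near-extremal marginal laws with weights depending on the past, a finite-horizon bounded penalty, attainment and weak compactness in $\mathscr{P}\bm Y^{-1}$, and a monotone limit. You need only $\sup_{P\in\mathscr P}E_P[F_m(\bm Y)]\ge E_{\widetilde Q}[F_m]$ for the kernel-built law $\widetilde Q$, not equality, and the kernel induction supplies this. This route even avoids the core.

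\textbf{Finite-dimensional Borel $\varphi$.} Here there is a real gap. The induction via the independence identity only yields $E_{\widetilde Q}[f]\le\Sbep[f(\bm Y)]$ for cylinder functions $f$ of $\bm Y$, i.e.\ $\widetilde Q\in\widetilde{\mathscr P}_{\max}$. Membership in $\text{core}(\Sbep)$ requires $E_P[X]\le\Sbep[X]$ for every $X\in\mathscr{H}_b$, and $\mathscr H$ may contain many variables that are not functions of $\bm Y$. On an abstract $\Omega$ there is no product structure on which to build the measure from kernels. Regularity only supplies countable additivity, not the extension from functions of $\bm Y$ to all of $\mathscr H_b$.

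The missing ingredient is the identity $\widetilde{\mathscr P}_{\max}=\text{core}(\Sbep)\bm Y^{-1}$. The paper proves it in Step 2 by strong convex separation in the space of measures on $\mathbb R^{\mathbb N}$, using weak compactness of $\text{core}(\Sbep)\bm Y^{-1}$. Alternatively, first extend the linear functional $f(\bm Y)\mapsto E_{\widetilde Q}[f]$ from the cylinder subspace to $\mathscr H_b$ by Hahn--Banach, dominated by $\Sbep$. Only then use regularity (Lemma \ref{lem:0.1}(iii)) and the Daniell--Stone theorem to obtain $P\in\text{core}(\Sbep)$ with $P\bm Y^{-1}=\widetilde Q$.
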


 \begin{remark} If condition (\ref{eqLILmomentcondition1}) is strengthened to $C_{\Capc}(Y_1^2)<\infty$, then $V_n^2$ in (\ref{eq:prop1.1})-(\ref{eq:prop2.2}) can be replaced by $\sum_{j=1}^n Y_j^2$. In this case, Proposition \ref{prop:2} is actually the strong law of large numbers for independent and identically distributed random variables $\{Y_j^2;j\ge 1\}$, see Zhang \cite{Zhang2024}, and $C_{\Capc}(Y_1^2)<\infty$ is also a necessary condition for the strong law of large numbers. Now $\{Y_j^2\wedge b_j^2;j\ge 1\}$ is a sequence of independent but not identically distributed random variables, and condition (\ref{eqLILmomentcondition1}) is weaker than $C_{\Capc}(Y_1^2)<\infty$. We will follow the approach of Zhang \cite{Zhang2024} in proving the strong law of large numbers to prove Proposition \ref{prop:2}.

 Proposition \ref{prop:1} is a self-normalized law of the iterated logarithm. When $P$ is replaced by $\mathbb V^{\mathcal{P}}$, (\ref{eq:prop1.1}) has been proved in Zhang \cite{Zhang2022}, where a key tool is de la Pe\~{n}a et al. \cite[Lemma 13.8]{PLS2009} on the self-normalized law of the iterated logarithm for martingale difference sequences. The key point of this paper is to prove the limit form of the self-normalized law of the iterated logarithm (\ref{eq:prop1.2}), while (\ref{eq:prop1.1}) is a direct consequence of (\ref{eq:prop1.2}). Since there is no ready-made result on the limit form of the law of the iterated logarithm for martingale difference sequences, we will use the Skorokhod embedding theorem to obtain (\ref{eq:prop1.2}).
 \end{remark}

To prove Propositions \ref{prop:1} and \ref{prop:2}, we need some lemmas. The first one is about exponential inequalities, see Zhang \cite{Zhang2016,Zhang2021,Zhang2022}.
\begin{lemma}\label{lem:ExpIneq} Let $\{X_1,\ldots, X_n\}$ be independent random variables on $(\Omega, \mathscr{H}, \Sbep)$. Denote
$A_n(p,y)=\sum_{i=1}^n\Sbep[(X_i^+\wedge y )^p]$,
$ \breve{B}_{n,y}=\sum_{i=1}^n \breve{\mathbb E}[(X_i\wedge y)^2]$. Then for any $p\ge 2$, $x,y>0$, $0<\delta\le 1$, we have
$$
  \upCapc\Big( \max_{k\le n} \sum_{i=1}^k(X_i-\breve{\mathbb E}[X_i])\ge x\Big)
\le    \upCapc\big(\max_{k\le n} X_k> y \big)
+\exp\bigg\{-\frac{x^2}{2(xy+\breve{B}_{n,y})   }\bigg\},
$$
\begin{align*}
& \upCapc\Big( \max_{k\le n} \sum_{i=1}^k(X_i-\breve{\mathbb E}[X_i])\ge x\Big) \\
\le & \upCapc\big(\max_{k\le n} X_k> y \big)
  +2\exp\{p^p\}\bigg\{\frac{A_n(p,y)}{y^p} \bigg\}^{\frac{\delta x}{10y}}
+\exp\bigg\{-\frac{x^2}{2\breve{B}_{n,y}(1+\delta)   }\bigg\}.
\end{align*}
 \end{lemma}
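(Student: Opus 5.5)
This is the standard exponential inequality of Zhang, and I would prove it by the classical Bernstein/Fuk–Nagaev route, adapted to the sub-linear setting. First I truncate: put $\bar X_i = X_i\wedge y$. On the complement of $\{\max_{k\le n}X_k>y\}$ we have $X_i=\bar X_i$ for all $i\le n$. Hence, by sub-additivity of $\upCapc$, it suffices to bound $\upCapc\big(\max_{k\le n}\sum_{i=1}^k(\bar X_i-\breve{\mathbb E}[X_i])\ge x\big)$. Since $\bar X_i\le X_i$, we get $\breve{\mathbb E}[\bar X_i]\le \breve{\mathbb E}[X_i]$, so centering by $\breve{\mathbb E}[X_i]$ only helps. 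I will therefore work with $Z_i=\bar X_i-\breve{\mathbb E}[\bar X_i]$, which is bounded above by roughly $y$ (up to the centering constant, handled as in the classical proof).

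The second step is an exponential supermartingale argument. For $t>0$ and a variable $X\le y$, the elementary inequality $e^{tx}\le 1+tx+\frac{x^2}{y^2}(e^{ty}-1-ty)$, valid for $x\le y$, gives
$$\Sbep[e^{t\bar X_i}]\le e^{t\breve{\mathbb E}[\bar X_i]}\exp\Big\{\frac{e^{ty}-1-ty}{y^2}\breve{\mathbb E}[\bar X_i^2]\Big\}.$$
Here I pass from $\Sbep$ to $\breve{\mathbb E}$ by truncating from below at $-c$ and letting $c\to\infty$, using monotonicity. Independence in the sub-linear sense, $\Sbep[\varphi(\bm X,Y)]=\Sbep[\Sbep[\varphi(\bm x,Y)]|_{\bm x=\bm X}]$, then yields by iteration
$$\Sbep\Big[\exp\Big\{t\sum_{i\le k}Z_i\Big\}\Big]\le \exp\Big\{\frac{e^{ty}-1-ty}{y^2}\breve B_{n,y}\Big\}.$$
These test functions are not bounded Lipschitz, so I would apply a further truncation of the exponential and a limiting argument.

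The maximal inequality is the main obstacle, because there is no martingale machinery under $\Sbep$. I would handle it with a stopping-type decomposition. Let $A_k=\{\max_{j<k}S_j<x\le S_k\}$ in terms of the truncated sums. Then $I\{\max_k S_k\ge x\}\le \sum_k I_{A_k}e^{t(S_k-x)}$, and on $A_k$ the future increments can be controlled by independence. More simply, I would use the Lévy/Ottaviani-type device of bounding $\upCapc(\max_{k\le n} e^{tS_k}\ge e^{tx})$ via the submartingale-like property: $\Sbep[e^{tS_n}\mid\mathcal F_k]\ge e^{tS_k}$ when $\breve{\mathbb E}[Z_i]\ge 0$. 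This property holds because the $Z_i$ are centered, and Jensen's inequality holds for sub-linear expectations with convex functions. Doob's argument then goes through using a smooth dominating function $g\ge I_A$ together with \eqref{eq1.3}. With the Chernoff bound $e^{-tx}\exp\{\ldots\}$ and the choice $t=\frac1y\log(1+xy/\breve B_{n,y})$, the standard Bennett-to-Bernstein simplification gives $\exp\{-x^2/(2(xy+\breve B_{n,y}))\}$.

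For the second inequality, I would split $\bar X_i$ further at level $\delta' y$, with $\delta'$ a small multiple of $\delta$. The part below $\delta' y$ gets the exponential bound with $t\approx x/\breve B_{n,y}$; taking the truncation level small produces the factor $(1+\delta)$. For the part between $\delta'y$ and $y$, the event that its sum exceeds a fraction of $x$ forces at least about $\delta x/(10y)$ summands to exceed $\delta'y$. By independence and Markov's inequality with $p$-th moments, the capacity of that event is bounded by
$$\binom{n}{m}\prod\frac{\Sbep[(X_i^+\wedge y)^p]}{(\delta'y)^p}\le \Big(\frac{e\,A_n(p,y)}{m(\delta' y)^p}\Big)^m.$$
The constants $(\delta')^{-p}$ are absorbed into $2\exp\{p^p\}$. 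Finally, sub-additivity combines the three terms.
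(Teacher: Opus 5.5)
The paper does not prove this lemma; it is quoted from Zhang's earlier papers, so your plan has to stand on its own. For the first inequality it essentially does. The truncation at $y$, the bound $e^{tu}\le 1+tu+\frac{u^2}{y^2}(e^{ty}-1-ty)$ for $u\le y$, the lower truncation at $-c$ to reach $\breve{\mathbb E}$, the product formula from independence, and the Bennett-to-Bernstein step are all fine.

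The maximal step should not be done the way you sketch it. Independence is only tested on $C_{b,Lip}$ functions, so the first-passage indicators $I_{A_k}$ must be smoothed. With continuous weights $g_k$, the additive Doob step $\Sbep[\sum_k g_k e^{tS_k}]\le\Sbep[\sum_k g_k e^{tS_n}]$ would need the \emph{lower} bound $\cSbep[e^{t(S_n-S_k)}]\ge 1$, which fails under mean uncertainty. Also, $\breve{\mathbb E}$-centering only gives $\Sbep[Z_i]\le 0$, so the submartingale-type property is not available either. A monotone backward induction avoids all of this. Take $\phi$ continuous and nondecreasing with $I\{u\ge x\}\le\phi(u)\le I\{u>x-\epsilon\}$, and set $\Pi_k=\prod_{i>k}\max\{1,\Sbep[e^{tZ_i}]\}$. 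Then one checks, for $k=n,n-1,\ldots,0$, that $\Sbep\big[\phi\big(\max_{k<j\le n}(s+S_j-S_k)\big)\big]\le\min\{1,e^{t(s-x+\epsilon)}\Pi_k\}$. This uses only monotonicity, independence, and $1\le e^{t(r-x+\epsilon)}$ for $r>x-\epsilon$; no condition on the means is needed.

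The genuine gap is in the second inequality. You split at the fixed level $\delta'y$ and bound the lower part by an exponential-moment argument that only knows $X_i\wedge\delta'y\le\delta'y$ and $\breve B_{n,y}$. That yields at best Bennett's bound $\exp\{-\frac{\breve B_{n,y}}{(\delta'y)^2}h(\frac{x\delta'y}{\breve B_{n,y}})\}$, where $h(u)=(1+u)\log(1+u)-u$. This is below $\exp\{-\frac{x^2}{2(1+\delta)\breve B_{n,y}}\}$ only when $x\delta'y\lesssim\delta\breve B_{n,y}$. In the regime $xy\gg\breve B_{n,y}$, which is exactly where the middle term is needed, it decays only like $\exp\{-\frac{x}{\delta'y}\log\frac{x\delta'y}{\breve B_{n,y}}\}$.

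Concretely, take i.i.d. Rademacher variables (a linear expectation), $y=\sqrt{n\log n}$, $x=\theta y$ and $p>2$. Then $\upCapc(\max_k X_k>y)=0$, $\breve B_{n,y}=n$ and $A_n(p,y)/y^p\asymp n^{1-p/2}(\log n)^{-p/2}$. So the right-hand side of the lemma is polynomially small in $n$, while your lower-part bound is only of order $(\log n)^{-c\theta/\delta'}$. The counting bound has a second defect: $\sup_{m\ge 1}\big(e/(m(\delta')^p)\big)^m$ is of order $\exp\{(\delta')^{-p}\}$. With $\delta'\propto\delta$ this cannot be absorbed into $2\exp\{p^p\}$.

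The missing idea is that the $p$-th moments must control everything above the level $\eta/t$, not just above $\delta'y$. For $u\le y$ one has $e^{tu}-1-tu\le\frac{t^2u^2}{2}e^{\eta}+e^{ty}(tu^+/\eta)^p$. Hence $\log\prod_i\max\{1,\Sbep[e^{tZ_i}]\}\le\frac{t^2}{2}e^{\eta}\breve B_{n,y}+(ty/\eta)^pe^{ty}A_n(p,y)/y^p$. Then take $\eta\asymp\delta$ and choose $t$ by cases: $t=x/((1+\delta)\breve B_{n,y})$ or $t\asymp\delta y^{-1}\log(y^p/A_n(p,y))$. In the second case $e^{-tx}$ is exactly a power $(A_n(p,y)/y^p)^{c\delta x/y}$. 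Moreover the $\delta$'s cancel in $(ty/\eta)^p$, which is how a constant depending only on $p$ can appear. This Fuk--Nagaev mechanism, not a fixed-level split plus counting, is what produces a bound of the stated shape.
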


 The following exponential inequality for martingales can be found in de la Pe\~{n}a \cite{Pena1999}, see Theorem 9.12 of de la Pe\~{n}a et al. \cite{PLS2009}.
\begin{lemma}\label{lem:ExpIneqM} Let $\{X_n; n\ge 1\}$ be a martingale difference sequence on a probability space $(\Omega,\mathcal{F},\pr)$ with respect to the $\sigma$-field filtration $\{\mathcal F_n\}$, and assume $|X_n|\le c$ a.s. Then
$$ \pr\bigg( \sum_{i=1}^n X_i\ge x \text{ and } \sum_{i=1}^n \ep[X_i^2|\mathcal{F}_{i-1}]\le y\; \text{ for some } n \text{ hold} \bigg)
\le \exp\bigg\{ -\frac{x^2}{2(xc+y)}\bigg\}.   $$
\end{lemma}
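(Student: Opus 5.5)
The plan is the classical Freedman--de la Pe\~{n}a supermartingale argument. We fix $\lambda>0$ and build an exponential supermartingale that penalizes the predictable quadratic variation. We then stop it at the first time the event in question occurs, and finally optimize over $\lambda$. Write $S_n=\sum_{i=1}^n X_i$ and $T_n=\sum_{i=1}^n \ep[X_i^2|\mathcal F_{i-1}]$. Note that $T_n$ is nondecreasing and $\mathcal F_{n-1}$-measurable.

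\emph{Step 1 (one-step bound).} Put $g(\lambda)=(e^{\lambda c}-1-\lambda c)/c^2$. The function $u\mapsto (e^u-1-u)/u^2$ (extended by $1/2$ at $0$) is nondecreasing on $\mathbb R$. Hence for $x\le c$ we have $e^{\lambda x}\le 1+\lambda x+g(\lambda)x^2$. Since $X_i\le c$ and $\ep[X_i|\mathcal F_{i-1}]=0$, taking conditional expectations gives
$$\ep[e^{\lambda X_i}|\mathcal F_{i-1}]\le 1+g(\lambda)\ep[X_i^2|\mathcal F_{i-1}]\le \exp\{g(\lambda)\ep[X_i^2|\mathcal F_{i-1}]\}.$$
Because $T_n$ is predictable, it follows that $M_n=\exp\{\lambda S_n-g(\lambda)T_n\}$, with $M_0=1$, is a nonnegative supermartingale.

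\emph{Step 2 (stopping).} Let $\tau=\inf\{n: S_n\ge x,\ T_n\le y\}$. The event in the lemma is exactly $\{\tau<\infty\}$. On this event $M_\tau\ge \exp\{\lambda x-g(\lambda)y\}$. By optional stopping for the bounded stopping times $\tau\wedge N$ we get $\ep[M_{\tau\wedge N}]\le 1$. Letting $N\to\infty$ with Fatou's lemma then gives
$$\pr(\tau<\infty)\le \exp\{-\lambda x+g(\lambda)y\}.$$
This can equivalently be read off from the maximal inequality for nonnegative supermartingales.

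\emph{Step 3 (optimization).} Expanding the series gives $g(\lambda)=\sum_{k\ge2}\lambda^kc^{k-2}/k!\le \frac{\lambda^2}{2}\sum_{j\ge0}(\lambda c/3)^j=\frac{\lambda^2}{2(1-\lambda c/3)}$ for $\lambda c<3$. Choose $\lambda=x/(y+xc/3)$. Then $\lambda c<3$, and $1-\lambda c/3=y/(y+xc/3)$, so $g(\lambda)y\le \lambda^2(y+xc/3)/2=\lambda x/2$. Therefore the exponent satisfies
$$-\lambda x+g(\lambda)y\le -\frac{x^2}{2(y+xc/3)}\le -\frac{x^2}{2(xc+y)},$$
which is the claim.

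The only delicate points are the convexity fact in Step 1, which needs only the one-sided bound $X_i\le c$, and the passage to $N\to\infty$ in Step 2. Both are routine. The main care is to ensure that the condition $T_n\le y$ is evaluated at the same random time at which $S_n\ge x$, which the definition of $\tau$ handles directly.
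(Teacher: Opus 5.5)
Your proof is correct and is essentially the standard argument behind this lemma, which the paper does not prove but cites from de la Pe\~{n}a (1999) and Theorem 9.12 of de la Pe\~{n}a, Lai and Shao: take an exponential supermartingale $\exp\{\lambda S_n-\psi(\lambda)T_n\}$, evaluate it at the first time with $S_n\ge x$ and $T_n\le y$, and optimize in $\lambda$. The only difference is the conditional m.g.f.\ bound: the Bernstein-type choice $\psi(\lambda)=\lambda^2/(2(1-\lambda c))$ with $\lambda=x/(y+xc)$ gives exactly $\exp\{-x^2/(2(xc+y))\}$, whereas your Bennett-type bound needs only $X_i\le c$ and yields the sharper $\exp\{-x^2/(2(y+xc/3))\}$ (if $y=0$ your $\lambda$ gives $\lambda c=3$, not $<3$, but then $g(\lambda)y=0$ and the conclusion is immediate).
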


The following lemma is from Guo and Li \cite{GL21} (see also Hu et al. \cite{HLL2021} and Gao et al. \cite{GLL21}).
\begin{lemma}\label{lem3} Let $\{X_n;n\ge 1\}$ be a sequence of independent random variables on a sub-linear expectation space $(\Omega,\mathscr{H},\Sbep)$ satisfying (\ref{eqexpressbyP}). Denote
$$ \mathcal{F}_n=\sigma(X_1,\ldots,X_n) \; \text{ and }\; \mathcal{F}_0=\{\emptyset,\Omega). $$
Then for each $P\in\mathscr{P}$, we have
$$ E_P\big[\varphi(X_n)|\mathcal{F}_{n-1}\big]\le \Sbep[\varphi(X_n)]\;\; a.s., \;\; \varphi\in C_{b,Lip}(\mathbb R). $$
\end{lemma}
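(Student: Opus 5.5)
The plan is to reduce the conditional inequality to an unconditional one tested against nonnegative functions of the past. Then we use the independence of $X_n$ from $(X_1,\ldots,X_{n-1})$ under $\Sbep$, and finally pass from Lipschitz test functions to indicators of sets in $\mathcal F_{n-1}$ by approximation. Fix $P\in\mathscr P$, $\varphi\in C_{b,Lip}(\mathbb R)$, and put $c=\Sbep[\varphi(X_n)]$, which is finite since $\varphi$ is bounded. By the definition of conditional expectation, the claim is equivalent to
$$E_P\big[I_A(\varphi(X_n)-c)\big]\le 0\quad\text{for all } A\in\mathcal F_{n-1}=\sigma(X_1,\ldots,X_{n-1}).$$
For $n=1$, $\mathcal F_0$ is trivial, so this is just $E_P[\varphi(X_1)]\le \Sbep[\varphi(X_1)]$, which follows from (\ref{eqexpressbyP}).

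\emph{Step 1 (Lipschitz test functions).} Let $\psi\in C_{b,Lip}(\mathbb R^{n-1})$ with $\psi\ge 0$, and write $\bm X=(X_1,\ldots,X_{n-1})$. The function $(\bm x,y)\mapsto \psi(\bm x)(\varphi(y)-c)$ is bounded and Lipschitz on $\mathbb R^{n}$. So (\ref{eqexpressbyP}) and the independence of $X_n$ from $\bm X$ give
$$E_P[\psi(\bm X)(\varphi(X_n)-c)]\le \Sbep[\psi(\bm X)(\varphi(X_n)-c)]=\Sbep\Big[\Sbep[\psi(\bm x)(\varphi(X_n)-c)]\big|_{\bm x=\bm X}\Big].$$
For fixed $\bm x$ we have $\psi(\bm x)\ge 0$. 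By positive homogeneity and translation invariance (from (b),(c) of the definition), $\Sbep[\psi(\bm x)(\varphi(X_n)-c)]=\psi(\bm x)(\Sbep[\varphi(X_n)]-c)=0$; when $\psi(\bm x)=0$ the value is trivially $0$. Hence the right-hand side is $\Sbep[0]=0$, and $E_P[\psi(\bm X)(\varphi(X_n)-c)]\le 0$.

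\emph{Step 2 (approximation).} Let $\mu=P\circ\bm X^{-1}$, a Borel probability measure on $\mathbb R^{n-1}$. Every $A\in\mathcal F_{n-1}$ has the form $\{\bm X\in B\}$ with $B$ Borel. By inner regularity there is a compact $K\subset B$ with $\mu(B\setminus K)$ small. Take $\psi_k(\bm x)=(1-k\,d(\bm x,K))^+$; these are bounded, Lipschitz and nonnegative, and $\psi_k\downarrow I_K$. Since $|\varphi(X_n)-c|$ is bounded, dominated convergence gives $E_P[I_K(\bm X)(\varphi(X_n)-c)]\le 0$. Letting $\mu(B\setminus K)\to0$ and again using boundedness, we obtain $E_P[I_A(\varphi(X_n)-c)]\le 0$, which proves the claim.

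The only delicate point is Step 2: the inequality of Step 1 is available only for test functions in $C_{b,Lip}$, because the definition of independence is phrased with such functions, so the extension to all of $\mathcal F_{n-1}$ must go through a measure-theoretic approximation under the single probability $P$. This works because $\varphi$ is bounded, which makes all the limits dominated. No regularity of $\Sbep$ or condition (CC) is needed; only the representation (\ref{eqexpressbyP}) on $\mathscr H_b$ is used.
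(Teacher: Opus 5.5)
Your proof is correct and complete. The paper itself does not prove this lemma; it quotes it from Guo and Li (see also Hu et al.\ and Gao et al.), so there is no in-paper argument to compare against. What you have written is a natural self-contained proof. You test the conditional inequality against nonnegative $C_{b,Lip}$ functions $\psi$ of $(X_1,\ldots,X_{n-1})$. Centering at $c=\Sbep[\varphi(X_n)]$ makes the inner expectation in the independence identity vanish identically. Without centering you would only get $E_P[\psi(\bm X)\varphi(X_n)]\le \Sbep[c\,\psi(\bm X)]$, which in general exceeds $c\,E_P[\psi(\bm X)]$. You then pass to indicators of sets in $\mathcal F_{n-1}$ by an approximation under the single measure $P$, which is harmless because everything is bounded.

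Compared with the citation, your route gains two things. First, it checks the lemma under the paper's own conventions. Independence is tested only against $C_{b,Lip}$ functions, a weaker hypothesis than Peng's $C_{l,Lip}$ version (cf.\ the remark after the definition of independence), and only the representation (\ref{eqexpressbyP}) on $\mathscr H_b$ is used. Second, you only ever use $E_P[f]\le \Sbep[f]$ for $f\in\mathscr H_b$, so your proof gives the conclusion for every $P\in\text{core}(\Sbep)$, not just for $P\in\mathscr P$. This is exactly the form in which the lemma is invoked in Step 1 of the proof of Proposition~\ref{prop:2} (for $Q\in\text{core}(\Sbep)$). With your proof, one does not need to observe separately that $\text{core}(\Sbep)$ also satisfies (\ref{eqexpressbyP}).

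A minor point: in Step 2, inner regularity by closed sets already suffices, since $\psi_k\downarrow I_K$ only requires $K$ to be closed.
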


The following lemma is Lemma 6.1 in Zhang \cite{Zhang2021}.

\begin{lemma} \label{lem2} Let $X\in \mathscr{H}$.
\begin{description}
  \item[\rm (i)]
 For any $\delta>0$, we have
$$ \sum_{n=1}^{\infty} \Capc\big(|X|\ge \delta \sqrt{n\log\log n} \big)<\infty \;\; \Longleftrightarrow C_{\Capc}\bigg[\frac{X^2}{\log\log|X|}\bigg]<\infty.
$$
 \item[\rm (ii)]
   If $C_{\Capc}\bigg[\frac{X^2}{\log\log|X|}\bigg]<\infty$, then for any $\delta>0$ and $p>2$, we have
$$ \sum_{n=1}^{\infty} \frac{\Sbep\big[\big(|X|\wedge (\delta \sqrt{n\log\log n})\big)^p\big]}{(n\log\log n)^{p/2}}<\infty. $$
 \item[\rm (iii)] If $C_{\Capc}\bigg[\frac{X^2}{\log\log|X|}\bigg]<\infty$, then for any $\delta>0$, we have
 $$ \Sbep[X^2\wedge (2\delta n\log\log n)]=o(\log \log n), $$
  $$ \breve{\mathbb E}[(|X|-\delta \sqrt{2 n\log\log n})^+]=o(\sqrt{\log\log n/n}). $$
\end{description}
\end{lemma}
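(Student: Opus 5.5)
The statement to be proved is Lemma \ref{lem2}. All three parts reduce to the same device. Let $a_n=\delta\sqrt{n\log\log n}$. We compare sums over $n$ with integrals of the tail function $x\mapsto \Capc(|X|>x)$. The moment condition is then rewritten through the layer-cake formula
$$C_{\Capc}\Big[\frac{X^2}{\log\log|X|}\Big]\approx \int_0^\infty \Capc(X^2>t)\,\frac{\dif t}{\log\log t}\approx\int_0^\infty \frac{x}{\log\log x}\,\Capc(|X|>x)\,\dif x .$$
This holds because $y\mapsto y^2/\log\log y$ is eventually increasing with derivative $\approx y/\log\log y$, and $\Capc(X^2/\log\log|X|\ge s)=\Capc(|X|\ge g^{-1}(s))$ for the increasing function $g$. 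The other tool is the domination $\Sbep[f]\le C_{\Capc}[f]$ for nonnegative $f\in\mathscr H$. Likewise $\breve{\mathbb E}[f]\le C_{\upCapc}[f]$, which follows from (\ref{eq1.3}) by approximating $f$ by step functions built from level sets.

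\emph{Step 1: part (i).} Since $\Capc(|X|\ge a_n)$ is nonincreasing in $n$, the series $\sum_n$ is comparable to $\int_1^\infty \Capc(|X|\ge a_t)\,\dif t$, up to constants and a shift in $\delta$. The event $|X|\ge a_t$ means $t\le h(X^2/\delta^2)$, where $h$ is the inverse of $t\mapsto t\log\log t$; hence $h(y)\sim y/\log\log y$. Thus the integral equals $C_{\Capc}[h(X^2/\delta^2)]$ up to a bounded term. Because $h(cy)\approx c\,h(y)$ and $\log\log(X^2)\sim\log\log|X|$, this is finite if and only if $C_{\Capc}[X^2/\log\log|X|]<\infty$. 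Both directions come from the same two-sided comparison.

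\emph{Step 2: part (ii).} First bound
$$\Sbep[(|X|\wedge a_n)^p]\le p\int_0^{a_n}x^{p-1}\Capc(|X|>x)\,\dif x.$$
Then interchange sum and integral:
$$\sum_n a_n^{-p}\int_0^{a_n}x^{p-1}\Capc(|X|>x)\,\dif x=\int_0^\infty x^{p-1}\Capc(|X|>x)\sum_{n:\,a_n\ge x}a_n^{-p}\,\dif x.$$
The sequence $a_n^{-p}$ is regularly varying in $n$ with index $-p/2<-1$, since $p>2$. Let $n_x\approx x^2/\log\log x$ be the first index with $a_n\ge x$. The tail sum is then $\approx n_x a_{n_x}^{-p}\approx x^{-p}\,x^2/\log\log x$. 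The right-hand side is therefore $\approx\int x\,\Capc(|X|>x)/\log\log x\,\dif x<\infty$. The only delicate point is this regular-variation tail estimate; it is where $p>2$ is used.

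\emph{Step 3: part (iii).} Put $c_n=2\delta n\log\log n$. Then
$$\Sbep[X^2\wedge c_n]\le M+\int_M^{c_n}\Capc(X^2>t)\,\dif t\le M+\log\log c_n\int_M^\infty \Capc(X^2>t)\frac{\dif t}{\log\log t}.$$
The last integral tends to $0$ as $M\to\infty$, and $\log\log c_n\sim\log\log n$, which gives $o(\log\log n)$. For the second claim, let $a=\delta\sqrt{2n\log\log n}$. Then
$$\breve{\mathbb E}[(|X|-a)^+]\le\int_a^\infty\Capc(|X|>x)\,\dif x\le\frac{\log\log a}{a}\int_a^\infty\frac{x}{\log\log x}\Capc(|X|>x)\,\dif x,$$
because $x/\log\log x$ is eventually increasing. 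The integral is a tail of a convergent integral, hence $o(1)$. Finally $\log\log a/a\approx\sqrt{\log\log n/n}$.

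I expect the main obstacle to be the bookkeeping in Steps 1 and 2: justifying that the sum-to-integral and change-of-variables comparisons hold up to constants. This uses only monotonicity of $\Capc(|X|>x)$ and the slow variation of $\log\log$, so no countable additivity of $\Capc$ is needed.
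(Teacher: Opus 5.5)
Your proposal is correct and is essentially the standard argument. The paper quotes this lemma from Zhang (2021) without proof, but its proof of Lemma~\ref{lem4} runs on the same mechanism you use: bound $\Sbep$ by $C_{\Capc}$ on bounded truncations, interchange sum and integral, and use the asymptotics of the inverse of $y\mapsto y^2/\log\log y$.

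One notational point to tidy: the bound you actually need in Step 3, and the one your level-set argument from (\ref{eq1.3}) yields, is $\breve{\mathbb E}[f]\le C_{\Capc}[f]$, not $C_{\upCapc}[f]$. This matters because the hypothesis only controls $\Capc$, and $\Capc\le\upCapc$.
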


\begin{lemma} \label{lem4} Let $X\in \mathscr{H}$, and assume $C_{\Capc}\bigg[\frac{X^2}{\log\log|X|}\bigg]<\infty$. Then for any $\delta>0$, we have
$$ \sum_{n=1}^{\infty} \frac{\Sbep\big[\big(|X|\wedge (\delta \sqrt{n/\log\log n})\big)^4\big]}{n^2}<\infty. $$
\end{lemma}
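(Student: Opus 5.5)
The plan is to dominate each summand by a Choquet-type integral of the tail capacity, interchange summation and integration, and then recognise the moment condition $C_{\Capc}\big[X^2/\log\log|X|\big]<\infty$.

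\textbf{Step 1: reduce each summand to a tail integral.} Write $a_n=\delta\sqrt{n/\log\log n}$; note $a_n\nearrow\infty$ for large $n$. For a bounded nonnegative $g=(|X|\wedge a)^4\in\mathscr H$, I will use
$$
\Sbep[g]\le C_{\Capc}[g]=\int_0^{a^4}\Capc(g\ge s)\dif s .
$$
This follows from (\ref{eq1.3}). One approximates $g$ from above by finite sums of Lipschitz functions of $g$ that dominate indicators $I\{g\ge k\epsilon\}$, loosening each level by $\epsilon$. Then one uses sub-additivity of $\Sbep$ and monotonicity, and lets $\epsilon\to0$, using monotonicity of $t\mapsto\Capc(g\ge t)$. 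The change of variables $s=t^4$ then gives
$$
\Sbep\big[(|X|\wedge a_n)^4\big]\le \int_0^{a_n}4t^3\,\Capc(|X|\ge t)\dif t .
$$

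\textbf{Step 2: interchange sum and integral.} By Tonelli,
$$
\sum_{n}\frac{\Sbep[(|X|\wedge a_n)^4]}{n^2}\le \int_0^\infty 4t^3\,\Capc(|X|\ge t)\sum_{n:\,a_n\ge t}\frac1{n^2}\,\dif t .
$$
For $t$ in a bounded range $[0,t_0]$ the contribution is trivially finite, since $\sum n^{-2}<\infty$ and $\Capc\le1$. For large $t$, the condition $a_n\ge t$ means $n\ge (t^2/\delta^2)\log\log n$. This forces $n\ge t^2/\delta^2$, hence $\log\log n\ge \log\log t$ up to constants, and so $n\ge c\,t^2\log\log t/\delta^2$. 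Consequently
$$
\sum_{n:\,a_n\ge t}n^{-2}\le C_\delta\big/\big(t^2\log\log t\big).
$$
The integral is therefore bounded by a constant plus $C\int_{t_0}^\infty \frac{t}{\log\log t}\,\Capc(|X|\ge t)\dif t$.

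\textbf{Step 3: compare with the moment condition.} Let $g(x)=x^2/\log\log x$. For $x\ge t_0$ it is increasing with $g'(x)\ge c\,x/\log\log x$. So $\{|X|\ge t\}=\{g(|X|)\ge g(t)\}$ for $t\ge t_0$, up to the event where $|X|$ is small, which is irrelevant for large $t$. Substituting $s=g(t)$ gives
$$
\int_{t_0}^\infty \frac{t}{\log\log t}\Capc(|X|\ge t)\dif t\le C\int_{g(t_0)}^\infty \Capc\big(g(|X|)\ge s\big)\dif s\le C\,C_{\Capc}\big[X^2/\log\log|X|\big]<\infty .
$$

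The only genuinely delicate point is the counting estimate in Step 2, namely inverting $a_n\ge t$ with the $\log\log n$ factor. The rest is routine. Step 1 is standard, and could equally be cited in the same form as it is used for Lemma \ref{lem2}(ii).
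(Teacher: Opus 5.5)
Your proposal is correct and is essentially the paper's argument: you bound each term by the Choquet integral $\int_0^{a_n}4t^3\,\Capc(|X|\ge t)\,dt$, interchange summation and integration, use that $\{n: a_n\ge t\}$ only begins around $t^2\log\log t$ (the paper's $\mathrm{inv}B(y)\approx y^2\log\log y$), and substitute $s=t^2/\log\log t$ to reach $C_{\Capc}[X^2/\log\log|X|]$. The only cosmetic difference is that the paper first replaces the series by $2\int_{e^e}^\infty \Sbep[(|X|\wedge B(x))^4]\,x^{-2}\,dx$ before applying Fubini, whereas you apply Tonelli to the series directly, which is slightly cleaner because it needs no monotonicity of the summand in $n$.
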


\begin{proof} Without loss of generality, assume $\delta=1$. Let $B(x)=\sqrt{x/\log\log x}$. Then $\text{\rm inv} B(y)\approx y^2\log \log y$. Note that when $Y$ is bounded, we have $\Sbep[|Y|]=\breve{\mathbb E}[|Y|]\le C_{\Capc}(|Y|)$. We obtain
\begin{align*}
 &\sum_{n={3^3}}^{\infty} \frac{\Sbep\big[\big(|X|\wedge (\delta \sqrt{n/\log\log n})\big)^4\big]}{n^2}
 \le 2\int_{e^e}^{\infty}\frac{\Sbep\big[\big(|X|\wedge B(x)\big)^4\big]}{x^{2}}dx\\
 \le & 2\int_{e^e}^{\infty}\frac{C_{\Capc}\big[\big(|X|\wedge B(x)\big)^4\big]}{x^{2}}dx
 \le 2 \int_{e^e}^{\infty}\int_0^{B^4(x)}\frac{\Capc\big(|X|^4\ge y\big)}{x^2}dy dx\\
 = &8 \int_{e^e}^{\infty}\int_0^{B(x)}y^3\frac{\Capc\big(|X|\ge y\big)}{x^2}dy dx \le C +8 \int_{e^e}^{\infty} y^3\Capc\big(|X|\ge y\big)\int_{x: B(x)\ge y}\frac{1}{x^2}dx dy\\
 =& C +8 \int_{e^e}^{\infty} y^3\Capc\big(|X|\ge y\big)\frac{1}{\text{\rm inv} B(y)} dy\\
 \le & C+C\int_{e^e}^{\infty}\Capc\bigg(\frac{|X|^2}{\log\log |X|}\ge \frac{y^2}{\log\log y}\bigg)\frac{y}{\log \log y}dy\\
 \le
& C+C\int_{0}^{\infty}\Capc\bigg(\frac{|X|^2}{\log\log |X|}\ge x\bigg)dx<\infty.
\end{align*}
\end{proof}

\bigskip

\begin{proof}[\bf Proof of Proposition \ref{prop:1}]
Note that $\{Y_n; n\ge 1\}$ is a sequence of independent and identically distributed random variables on the sub-linear expectation space $(\Omega,\mathscr{H},\Sbep)$, and $\Sbep$ satisfies (\ref{eqexpressbyP}). Let $P\in \mathscr{P}$. Denote $\mathcal{F}_n=\sigma(Y_1,\ldots,Y_n)$, $\mathcal{F}_0=\{\emptyset,\Omega\}$, $Z_n=Y_n^{(b_n)}$, $\Delta M_n=Z_n-E_P[Z_n|\mathcal{F}_{n-1}]$, and $M_n=\sum_{i=1}^n \Delta M_i$. Then $\{M_n,\mathcal{F}_n; n\ge 1\}$ is a martingale under the probability measure $P$. By Lemma \ref{lem3}, we have
$$ E_P[Z_j|\mathcal{F}_{j-1}]\le \Sbep[Z_j]=\Sbep[ Y_1^{( b_j)}]\to\breve{\mathbb E}[Y_1]=0. $$
Similarly,
$$ E_P[-Z_j|\mathcal{F}_{j-1}]\le \Sbep[-Z_j]=\Sbep[-Y_1^{( b_j)}]\to\breve{\mathbb E}[-Y_1]=0. $$
Therefore,
$$ \big|E_P[Z_j|\mathcal{F}_{j-1}]\big|\to 0 \; a.s.\; P. $$
On the other hand, by (\ref{eq:prop2.1}) (which has been proved in Zhang \cite{Zhang2022}), (\ref{eqLILmomentcondition2}), and (\ref{eqLILmomentcondition3}), we have, under the probability measure $P$,
\begin{align}\label{eq:prop:1.1} V_n^2=\sum_{j=1}^n Z_j^2\approx n\;\; a.s.,
\end{align}
and hence
\begin{align}\label{eq:prop:1.2} U_n^2=:\sum_{j=1}^n (\Delta M_j)^2=\sum_{j=1}^n (Z_j-E_P[Z_j|\mathcal{F}_{j-1}])^2\sim V_n^2 \approx n\;\; a.s.
\end{align}
Furthermore, by Lemma \ref{lem3}, we have
\begin{align}\label{eq:prop:1.2ad}
 E_P\big[|\Delta M_j|^p|M_1,\ldots, M_{j-1}\big]= E_P\big[|\Delta M_j|^p|\mathcal{F}_{j-1}\big]
\le C_p E_P\big[|Z_j|^p|\mathcal{F}_{j-1}\big]
\le C_p \Sbep\big[|Z_j|^p\big]\; a.s.
\end{align}
Next we prove
\begin{align}\label{eq:LILforZ1} & P\bigg(\lim_{n\to\infty}\max_{k\le n} \frac{\sum_{j=1}^k (Z_j-E_P[Z_j|\mathcal{F}_{j-1}])}{V_k (2\log\log n)^{1/2}}=1\bigg)=1.
\end{align}
Note that by (\ref{eq:prop:1.2}), it suffices to prove
\begin{align}\label{eq:LILforZ1-2} & P\bigg(\lim_{n\to\infty}\max_{k\le n} \frac{M_k}{U_k (2\log\log n)^{1/2}}=1\bigg)=1.
\end{align}
By the Skorokhod embedding theorem (see Theorem A.1 of Hall and Heyde \cite{HH1980}), without loss of generality, we may assume
$M_n=W(T_n)$, $T_n=\sum_{j=1}^n \tau_j$,
where $\{W(t);t\ge 0\}$ is a standard Brownian motion, $\tau_j$ are $\mathscr{G}_j$-measurable nonnegative random variables, $\mathscr{G}_j=\sigma(M_1,\ldots,M_j,W(t),0\le t\le T_j)$,
and
\begin{align}\label{eq:prop:1.3}
\begin{aligned} & E_P[\tau_j|\mathscr{G}_{j-1}]=E_P\big[(\Delta M_j)^2|M_1,\ldots, M_{j-1}\big], \\
 & E_P[\tau_j^r|\mathscr{G}_{j-1}]\le C_r E_P\big[|\Delta M_j|^{2r}|M_1,\ldots, M_{j-1}\big], \; r\ge 1.
 \end{aligned}
\end{align}
By the limit form of the law of the iterated logarithm for Brownian motion (see Chen \cite{Chen2015}), we have
\begin{align}\label{eq:embed:2} \lim_{n\to \infty}\max_{1\le s\le T_n}\frac{W(s)}{\sqrt{s}\sqrt{2\log \log T_n}}=1\;\; a.s. \text{ under } P.
\end{align}

Note that $\{\tau_j-E_P[\tau_j|\mathscr{G}_{j-1}]; j\ge 1\}$ is a martingale difference sequence under the probability measure $P$, and by (\ref{eq:prop:1.3}), (\ref{eq:prop:1.2ad}), and Lemma \ref{lem4}, we have
\begin{align}\label{eq:prop:1.4}
&\sum_{j=3}^{\infty}\frac{ E_P\big[(\tau_j-E_P[\tau_j|\mathscr{G}_{j-1}])^2\big]}{j^2}
\le \sum_{j=3}^{\infty}\frac{ E_P\big[\tau_j^2\big]}{j^2}
\le C\sum_{j=3}^{\infty}\frac{ E_P\big[(\Delta M_j )^4\big]}{j^2} \nonumber\\
\le &
      C\sum_{j=3}^{\infty}\frac{ \Sbep\big[Z_j^4\big]}{j^2}
\le C\sum_{j=3}^{\infty} \frac{\Sbep\big[\big(|Y_1|\wedge (\delta \sqrt{j/\log\log j})\big)^4\big]}{j^2}<\infty.
\end{align}
Thus, by the law of large numbers for martingale differences, we have
\begin{align}\label{eq:prop:1.5} \frac{\sum_{j=1}^n(\tau_j-E_P[\tau_j|\mathscr{G}_{j-1}])}{n}\to 0 \;\; a.s. \text{ under } P.
\end{align}
Similarly, for the martingale difference sequence $\{(\Delta M_j)^2-E_P[(\Delta M_j)^2|M_1,\ldots, M_{j-1}]; j\ge 1\}$, we also have
\begin{align}\label{eq:prop:1.6} \frac{\sum_{j=1}^n \big( (\Delta M_j)^2-E_P[(\Delta M_j)^2|M_1,\ldots, M_{j-1}]\big)}{n}\to 0\;\; a.s. \text{ under } P.
\end{align}
Combining (\ref{eq:prop:1.2}), (\ref{eq:prop:1.3}), (\ref{eq:prop:1.5}), and (\ref{eq:prop:1.6}), we obtain
\begin{align}\label{eq:prop:1.7}
 &\sum_{j=1}^n \tau_j \sim \sum_{j=1}^n E_P[\tau_j|\mathscr{G}_{j-1}] = \sum_{j=1}^n E_P[(\Delta M_j)^2|M_1,\ldots, M_{j-1}] \sim U_n^2\approx n \;\; a.s. \text{ under } P.
\end{align}
At the same time, (\ref{eq:prop:1.4}) implies that $\tau_n/n\to 0$ a.s. under $P$. Thus, by the fact that
\begin{align}\label{eq:prop:1.7ad}
f_n \ge 0, \;\; 0<g_n\nearrow \infty \text{ and } \frac{f_n}{g_n}\to 0 \implies \frac{\max_{k\le n}f_k}{g_n}\to 0,
\end{align}
we have
$$ \frac{\max_{k\le n}\tau_k}{n}\to 0 \;\; a.s. \text{ and } \frac{\max_{k\le n}\tau_k}{T_n}\to 0 \;\; a.s. \text{ under } P. $$
By the properties of Brownian motion (see Theorem 1.2.1 of Cs\H org\"o and R\'ev\'esz \cite{CR1981}), we have
$$ \frac{\max_{T_{k-1}\le s\le T_k}|W(s)-W(T_k)|}{\sqrt{T_k\log\log T_k}}\to 0 \;\; a.s. \text{ under } P. $$
From this we obtain
\begin{align*}
&\max_{T_{k-1}\le s\le T_k}\bigg|\frac{W(s)}{\sqrt{s}}-\frac{W(T_k)}{\sqrt{T_k}}\bigg|\\
\le &\frac{\max_{T_{k-1}\le s\le T_k}|W(s)-W(T_k)|}{\sqrt{T_{k}}}
+\frac{|W(T_k)|}{\sqrt{T_k}}\bigg(\sqrt{\frac{T_k}{T_{k-1}}}-1\bigg)\\
=& o\bigg( \sqrt{2\log\log T_k}\bigg)+O\bigg( \sqrt{2\log\log T_k}\bigg)\cdot o(1)=o\Big( \sqrt{2\log\log T_k}\Big)\;\; a.s. \text{ under } P.
\end{align*}
Thus,
\begin{align*}
& \bigg|\max_{1\le s\le T_n}\frac{W(s)}{\sqrt{s}}-\max_{1\le k\le n}\frac{W(T_k)}{\sqrt{T_k}}\bigg|\\
\le & \max_{2\le k\le n} \max_{T_{k-1}\le s\le T_k}\bigg|\frac{W(s)}{\sqrt{s}}-\frac{W(T_k)}{\sqrt{T_k}}\bigg|+O(1)
= o\Big( \sqrt{2\log\log T_n}\Big)\;\; a.s. \text{ under } P.
\end{align*}
Combining the above with (\ref{eq:embed:2}), we obtain
\begin{align}\label{eq:prop:1.8}
& \lim_{n\to \infty} \max_{k\le n}\frac{M_k}{\sqrt{T_k}\sqrt{2\log\log n}}
= \lim_{n\to \infty} \max_{k\le n}\frac{W(T_k)}{\sqrt{T_k}\sqrt{2\log\log T_n}}=1 \;\; a.s. \text{ under } P.
\end{align}
Therefore, by (\ref{eq:prop:1.7}) and (\ref{eq:prop:1.8}), we obtain (\ref{eq:LILforZ1-2}). Thus (\ref{eq:LILforZ1}) is proved.

Next we prove that $Z_j-E_P[Z_j|\mathcal{F}_{j-1}]$ in (\ref{eq:LILforZ1}) can be replaced by $Y_j$.
Let $\lambda > 1$. As in Zhang \cite{Zhang2022}, denote $d_n=\sqrt{2n\log\log n}$, $n_k=[\lambda^k]$, $I(k)=\{n_k+1,\ldots,n_{k+1}\}$. Then
$n_k/n_{n_{k+1}}\to {1}/{\lambda}$, $d_{n_k}/d_{n_{k+1}}\to 1/\sqrt{\lambda}$.

For $p>2$, by Lemma \ref{lem2} (ii), we have
\begin{align}\label{eq:prop:1.15} \sum_{k=1}^{\infty} \frac{ \Lambda_{n_k,n_{k+1}}(p)}{d_{n_{k+1}}^p}<\infty,
\end{align}
where
$$\Lambda_{n_k,n_{k+1}}(p)=\sum_{j\in I(k)}\Sbep[\big((|Y_j|\wedge d_{n_{k+1}}\big)^p]. $$
Let
$$ \mathbb N_1=\bigg\{k\in \mathbb N; \frac{ \Lambda_{n_k, n_{k+1}}(p)}{d_{n_{k+1}}^p}\le t_{n_{k+1}}^{-2p}\bigg\}.
$$
Since $\alpha_j\to 0$, $\alpha_j^{1-p}t_j^{-2}\to 0$, when $k\in \mathbb N_1$, we have
\begin{align}\label{eq:prop:1.16}
 &\frac{\sum_{j\in I(k)}
\Sbep[ |Y_j^{(d_{n_{k+1}})}-Z_j|]}{d_{n_{k+1}}}
\le C\frac{\Lambda_{n_k,n_{k+1}}(p)}{d_{n_{k+1}}^p} \alpha_{n_{k+1}}^{1-p}t_{n_{k+1}}^{2p-2} \le C\alpha_{n_{k+1}}^{1-p}t_{n_{k+1}}^{-2}\to 0, \\
 & \label{eq:prop:1.17}
 \frac{\sum_{j\in I(k)}
  \Sbep[(Y_j^{(d_{n_{k+1}})}-Z_j)^2] }{ n_{k+1} }
   \le C\frac{\Lambda_{n_k,n_{k+1}}(p)}{d_{n_{k+1}}^p} \alpha_{n_{k+1}}^{2-p}t_{n_{k+1}}^{2p-2} \le C \alpha_{n_{k+1}}^{2-p}t_{n_{k+1}}^{-2}\to 0,
\end{align}
Define
$$ Z_{j,1}=Z_j \text{ if } j\in I(k)\; \text{ and }\; k\in \mathbb N_1, \text{ otherwise } 0, $$
$$ Z_{j,2}=0 \text{ if } j\in I(k)\; \text{ and }\; k\in \mathbb N_1, \text{ otherwise } Z_j. $$
Then $Z_j=Z_{j,1}+Z_{j,2}$. The definitions of $d_n$, $n_k$, $\alpha_j$, $t_j$, $\Lambda_{n_k,n_{k+1}}(p)$, $\mathbb N_1$, $Z_{j,1}$, $Z_{j,2}$, etc. are the same as in Zhang \cite{Zhang2022}. Using (\ref{eq:prop:1.15})-(\ref{eq:prop:1.17}) together with Lemmas \ref{lem:ExpIneq} and \ref{lem:ExpIneqM}, Zhang \cite{Zhang2022} proved
$$ \frac{ \sum_{j=1}^n (Z_{j,2}-E_P[Z_{j,2}|\mathcal{F}_{j-1}]) }{\sqrt{2n\log\log n}}\to 0\;\; a.s. \text{ under } P, $$
$$ \outcCapc\bigg( \frac{\sum_{j=1}^n (Y_j-Z_{j,1}) }{ \sqrt{2n\log\log n}}\to 0\bigg)=1, $$
 $$ \frac{\sum_{j=1}^n |E_P[Z_{j,1} |\mathcal{F}_{j-1}]|}{\sqrt{2n\log\log n}}\to 0\;\; a.s. \text{ under } P. $$
See (3.9) and the preceding equation, (3.15) and the preceding equation, and (3.10) and the preceding equation in Zhang \cite{Zhang2022}. Therefore,   
\begin{align}\label{eq:remainder}
& \lim_{n\to \infty} \frac{\sum_{j=1}^n \big(Y_j-(Z_j-E_P[Z_j|\mathcal{F}_{j-1}])\big) }{ V_n \sqrt{2\log\log n}} \nonumber \\
= & \lim_{n\to \infty} \frac{\sum_{j=1}^n \big(Y_j-(Z_j-E_P[Z_j|\mathcal{F}_{j-1}])\big) }{ \sqrt{2n\log\log n}}=0 \;\; a.s. \text{ under } P.
\end{align}
From this and the fact (\ref{eq:prop:1.7ad}), we obtain
$$ \lim_{n\to \infty} \max_{k\le n}\frac{|\sum_{j=1}^k \big(Y_j-(Z_j-E_P[Z_j|\mathcal{F}_{j-1}])\big)| }{ V_k \sqrt{2\log\log n}} =0 \;\; a.s. \text{ under } P. $$
Thus, combining with (\ref{eq:LILforZ1}), we obtain
$$ P\bigg(\lim_{n\to\infty}\max_{k\le n} \frac{\sum_{j=1}^k Y_j}{V_k (2\log\log n)^{1/2}}=1\bigg)=1.
$$
Similarly,
\begin{align*}
& P\bigg(\lim_{n\to\infty}\min_{k\le n} \frac{\sum_{j=1}^k Y_j}{V_k (2\log\log n)^{1/2}}=-1\bigg)
 =P\bigg(\lim_{n\to\infty}\max_{k\le n} \frac{\sum_{j=1}^k (- Y_j)}{V_k (2\log\log n)^{1/2}}=1\bigg)
=1.
\end{align*}
Thus (\ref{eq:prop1.2}) is proved.

It is easy to verify that (\ref{eq:prop1.2}) implies
\begin{align}\label{eq:proof:compact:1} P\bigg(\liminf_{n\to\infty} \frac{S_n}{V_n (2\log\log n)^{1/2}}=-1 \text{ and } \limsup_{n\to\infty} \frac{S_n}{V_n (2\log\log n)^{1/2}}=1 \bigg)=1.
\end{align}
On the other hand, by Lemma \ref{lem2} (i), we have
$$ \sum_{n=1}^{\infty} \Capc(|Y_n|\ge \epsilon\sqrt{2n\log\log n})<\infty, \;\; \forall \epsilon>0. $$
From this and the countable sub-additivity of $\outCapc$, we obtain
$$ \outCapc\bigg( \limsup_{n\to \infty} \frac{|Y_n| }{ \sqrt{2n\log\log n}}>0\bigg)=0. $$
Thus
\begin{align}\label{eq:proof:compact:2}
& P\bigg(\lim_{n\to \infty} \frac{\max_{i\le n}|Y_i| }{ V_n(2\log\log n)^{1/2}}=0\bigg)= P\bigg(\lim_{n\to \infty} \frac{\max_{i\le n}|Y_i| }{ \sqrt{2n\log\log n}}=0\bigg)\nonumber \\
=&P\bigg(\lim_{n\to \infty} \frac{|Y_n| }{ \sqrt{2n\log\log n}}=0\bigg)
\ge \outcCapc\bigg( \lim_{n\to \infty} \frac{|Y_n| }{ \sqrt{2n\log\log n}}=0\bigg)=1.
\end{align}
Furthermore, from (\ref{eq:prop:1.4}),
 $$ \sum_{j=3}^{\infty}\frac{ E_P\big[Z_j^4\big]}{j^2}\le
      \sum_{j=3}^{\infty}\frac{ \Sbep\big[Z_j^4\big]}{j^2} <\infty.
$$
This implies
$ P\Big(\frac{Z_n^2}{n}\to 0\Big)=1. $
Thus under $P$,
\begin{align*}   \frac{V_{n+1}^2}{V_n^2}-1=\frac{Z_{n+1}^2}{V_n^2}\approx \frac{Z_{n+1}^2}{n+1}\to 0\;\; a.s.
\end{align*}
From (\ref{eq:proof:compact:1}), (\ref{eq:proof:compact:2}), and the above inequality, we obtain, under $P$,
\begin{align}\label{eq:proof:compact:4}
&\frac{S_{n+1}}{V_{n+1} (2\log\log (n+1))^{1/2}}-\frac{S_n}{V_n (2\log\log n)^{1/2}}\nonumber\\
=&\frac{Y_{n+1}}{V_{n+1} (2\log\log (n+1))^{1/2}}+ \frac{S_n}{V_n (2\log\log n)^{1/2}}
\bigg(1-\frac{V_n\sqrt{\log\log n}}{V_{n+1}\sqrt{\log\log (n+1)}}\bigg)\nonumber\\
&\to 0\;\;a.s.
\end{align}
It is easy to verify that (\ref{eq:proof:compact:1}) and (\ref{eq:proof:compact:4}) imply (\ref{eq:prop1.1}), see Petrov \cite[Page 250]{Petrov1995}. The theorem is proved.
 \end{proof}

 \bigskip
\begin{proof}[\bf Proof of Proposition \ref{prop:2}] (\ref{eq:prop2.1}) was proved in Zhang \cite{Zhang2022}. Similar to the proof of Theorem 2.1 in Zhang \cite{Zhang2024}, we prove (\ref{eq:prop2.2}) in five steps. Here we assume condition (CC) holds for the family of probability measures $\mathscr{P}$. Thus by Lemma \ref{lem:0.1}, it also holds for $\text{core}(\Sbep)$, and $\mathscr{P}\bm Y^{-1}$ and $\text{core}(\Sbep)\bm Y^{-1}$ are both weakly compact families of probability measures on $\mathbb R^{\mathbb N}$.

{\em Step 1}. We first prove that there exist $\epsilon_k\searrow 0$ and a constant $M>0$ such that
\begin{align}\label{eqconvergence}
\sum_{k=1}^{\infty} \sup_{Q\in \text{core}(\Sbep)} Q\bigg(\max_{ n\le n_{k+1}}\bigg|\sum_{j=1}^n(Z_j^2-E_Q[Z_j^2|\mathcal F_{j-1}])\bigg|\ge \epsilon_k n_k\bigg)\le M<\infty,
\end{align}
where $n_k=2^k$.

Note that $\big|Z_j^2-E_Q[Z_j^2|\mathcal F_{j-1}]\big|\le b_j^2$,
$$E_Q\big[(Z_j^2-E_Q[Z_j^2|\mathcal F_{j-1}])^2|\mathcal{F}_{j-1}\big]
\le E_Q\big[Z_j^4||\mathcal{F}_{j-1}\big]\le \Sbep[Z_j^4]\le b_{j}^2\overline{\sigma}^2.$$
Let $\epsilon_k=\sqrt{\alpha_{n_{k+1}}}$. Applying Lemma \ref{lem:ExpIneqM} to the martingale difference sequence $\{Z_j^2-E_Q[Z_j^2|\mathcal F_{j-1}];j=1,\ldots, n_{k+1}\}$ with $c= b_{n_{k+1}}^2$, $y=n_{k+1}b_{n_{k+1}}^2\overline{\sigma}^2$, and $x=\epsilon_k n_k$, for sufficiently large $k$ we have
\begin{align*}
&\sup_{Q\in \text{core}(\Sbep)}Q\bigg(\max_{ n\le n_{k+1}}\bigg|\sum_{j=1}^n(Z_j^2-E_Q[Z_j^2|\mathcal F_{j-1}])\bigg|\ge \epsilon_k n_k\bigg)\\
\le & 2 \exp\bigg\{-\frac{\epsilon_k^2 n_k^2}{2(\epsilon_k n_{k}b_{n_{k+1}}^2+n_{k+1} b_{n_{k+1}}^2 \overline{\sigma}^2)}\bigg\}\le \exp\{-2\log\log n_k\}.
\end{align*}
Thus (\ref{eqconvergence}) is proved.

{\em Step 2}. Let
\begin{align} \label{eq:core-real:1} \widetilde{\mathscr{P}}_{\max}= \bigg\{\widetilde{P}: \widetilde{P} \text{ is a probability measure on } \mathbb R^{\mathbb N} \text{ satisfying }
    E_{\widetilde{P}}[\varphi]\le \Sbep[\varphi\circ \bm Y], \forall \varphi\in \widetilde{\mathscr{H}}_b\bigg\},
 \end{align}
 where $\bm Y=(Y_1,Y_2,\ldots)$,
\begin{align} \label{eq:core-real:2}\widetilde{\mathscr{H}}_b=\big\{\varphi: \varphi\in C_{b,Lip}(\mathbb R^d)\text{ for some } d\ge 1 \big\}.
\end{align}
We prove
\begin{align}\label{eq:identity}
 \widetilde{\mathscr{P}}_{\max}=\text{core}(\Sbep) \bm Y^{-1}.
\end{align}

This conclusion was proved in Zhang \cite{Zhang2024} (see Lemma 3.4 of Zhang \cite{Zhang2024}). For the sake of completeness, and noting that both $\widetilde{\mathscr{P}}_{\max}$ and $\text{core}(\Sbep)$ are convex sets, we give another proof using the convex separation theorem. To this end, define
$$\widetilde{\mathbb E}[\varphi]=\Sbep[\varphi\circ\bm Y], \; \varphi\in \widetilde{\mathscr{H}}_b. $$
Then $(\mathbb R^{\mathbb N}, \widetilde{\mathscr{H}}_b, \widetilde{\mathbb E})$ is a sub-linear expectation space, and $\widetilde{\mathscr{P}}_{\max}=\text{core}(\widetilde{\mathbb E})$.
By the definition of $\widetilde{\mathscr{P}}_{\max}$, it is clear that $\mathscr{P}\bm Y^{-1}\subset \text{core}(\Sbep)\bm Y^{-1}\subset \widetilde{\mathscr{P}}_{\max}$. Hence
\begin{align} \label{eq:core-real:3}
 \sup_{\widetilde{P}\in \widetilde{\mathscr{P}}_{\max}}E_{\widetilde{P}}[\varphi]=&
\sup_{\widetilde{P}\in \text{core}(\Sbep)\bm Y^{-1}}E_{\widetilde{P}}[\varphi]=\sup_{\widetilde{P}\in \mathscr{P}\bm Y^{-1}}E_{\widetilde{P}}[\varphi]\nonumber\\
=& \Sbep\big[\varphi(Y_1,\ldots, Y_d)\big]=\widetilde{\mathbb E}[\varphi], \; \forall \varphi\in C_{b,Lip}(\mathbb R^d),
\end{align}
On the other hand, since $\mathscr{P}\bm Y^{-1}$ and $\text{core}(\Sbep)\bm Y^{-1}$ are weakly compact families of probability measures on the countable-dimensional space $\mathbb R^{\mathbb N}$, by Lemma \ref{lem:0.1}, $\widetilde{\mathscr{P}}_{\max}=\text{core}(\widetilde{\mathbb E})$ is also a weakly compact family of probability measures on $\mathbb R^{\mathbb N}$.

For any $\varphi\in C_b(\mathbb R^{\mathbb N})$, assume $|\varphi(\bm x)|\le M$. Note that $\widetilde{\mathscr{P}}_{\max}$, $\mathscr{P}\bm Y^{-1}$, and $\text{core}(\Sbep)\bm Y^{-1}$ are all weakly compact, hence tight. For any $\epsilon>0$, there exists a compact set $K\subset \mathbb R^{\mathbb N}$ such that
$$ P(K^c)<\epsilon/(8M), \; P\in \widetilde{\mathscr{P}}_{\max} \bigcup\text{core}(\Sbep)\bm Y^{-1} \bigcup \mathscr{P}\bm Y^{-1}. $$
On the compact set $K$, the continuous function $\varphi$ can be uniformly approximated by bounded Lipschitz functions on finite-dimensional spaces. Thus there exists $\varphi_d\in C_{b,Lip}(\mathbb R^d)$ such that $|\varphi_d(\bm x)|\le 2M$, $\sup_{\bm x\in K}|\varphi_d(\bm x)-\varphi(\bm x)|<\epsilon/2$. Then
$$|\varphi_d(\bm x)-\varphi(\bm x)|< \epsilon/2+4MI\{\bm x\in K^c\}. $$
From this and (\ref{eq:core-real:3}), we obtain
$$\Big|\sup_{\widetilde{P}\in \widetilde{\mathscr{P}}_{\max}}E_{\widetilde{P}}[\varphi]-
\sup_{\widetilde{P}\in \mathscr{P}\bm Y^{-1} } E_{\widetilde{P}}[\varphi] \Big|
<\epsilon+\epsilon=2\epsilon. $$
Hence
\begin{align} \label{eq:core-real:3ad}
 \sup_{\widetilde{P}\in \widetilde{\mathscr{P}}_{\max}}E_{\widetilde{P}}[\varphi]=
\sup_{\widetilde{P}\in \text{core}(\Sbep)\bm Y^{-1}}E_{\widetilde{P}}[\varphi]=
\sup_{\widetilde{P}\in \mathscr{P}\bm Y^{-1}}E_{\widetilde{P}}[\varphi], \; \forall \varphi\in C_{b}(\mathbb R^{\mathbb N}).
\end{align}

To prove (\ref{eq:identity}), it suffices to show that
 $\widetilde{\mathscr{P}}_{\max} \subset \text{core}(\Sbep) \bm Y^{-1}$. Suppose, to the contrary, that $P_0\in \widetilde{\mathscr{P}}_{\max}$, but $P_0\not\in \text{core}(\Sbep)\bm Y^{-1}$. We use the convex separation theorem to derive a contradiction. Denote
 $$\mathbb X=\mathscr{P}(\mathbb R^{\mathbb N})=\big\{\text{finite signed measures on }\mathbb R^{\mathbb N}\big\}. $$
 Endow $\mathbb X$ with the weak convergence topology, equivalently, the family of seminorms $\big(\rho_f\big)_{f\in C_b(\mathbb R^{\mathbb N})}$:
 $$ \rho_f(\mu)=\Big|\int f d\mu\Big|, \mu\in \mathbb X. $$
 Then $\mathbb X$ is a locally convex Hausdorff space, and its dual space $\mathbb X^{\prime}=\{ \text{continuous linear functionals on } \mathbb X\}$ is precisely $C_b(\mathbb R^{\mathbb N})$, i.e., $\mathcal{L}\in \mathbb X^{\prime}$ if and only if there exists $f\in C_b(\mathbb R^{\mathbb N})$ such that
 $$ \mathcal{L}(\mu)=\int fd \mu, \;\; \mu\in \mathbb X. $$
 On the other hand, $\mathbb A=\text{core}(\Sbep)\bm Y^{-1}$ and $\mathbb B=\{P_0\}$ are both compact convex sets in the locally convex Hausdorff space $\mathbb X$, and $\mathbb A \cap \mathbb B=\emptyset$. By the strong separation theorem for convex sets, there exist $\alpha\in\mathbb R$ and a continuous linear functional $\mathcal{L}\in \mathbb X^{\prime}$ such that
 $$ \sup_{\mu\in \mathbb A}\mathcal{L}(\mu)<\alpha<\inf_{\mu \in \mathbb B}\mathcal{L}(\mu). $$
 That is, there exists $f\in C_b(\mathbb R^{\mathbb N})$ such that
 $$ \sup_{\widetilde{P}\in \text{core}(\Sbep)\bm Y^{-1}}E_{\widetilde{P}}[f]<\alpha< E_{P_0}[f]. $$

 But since $P_0\in \widetilde{\mathscr{P}}_{\max}$, by (\ref{eq:core-real:3ad}) we have
 $$ E_{P_0}[f]\le \sup_{\widetilde{P}\in \widetilde{\mathscr{P}}_{\max}}E_{\widetilde{P}}[f]=
\sup_{\widetilde{P}\in \text{core}(\Sbep)\bm Y^{-1}}E_{\widetilde{P}}[f]. $$
This is a contradiction, and (\ref{eq:identity}) is proved.

{\em Step 3}. We prove that for any Borel-measurable function $\varphi_j(x_1,\ldots,x_j), j\ge 1$ on $\mathbb R^j$, there exists a probability measure $Q\in \text{core}(\Sbep)$ such that
\begin{align}\label{eqconditionE} E_Q[Z_j^2|\mathcal F_{j-1}]= \cSbep[Z_j^2]\vee \varphi_{j-1}(Y_1,\ldots,Y_{j-1})\wedge \Sbep[Z_j^2]\;\; a.s. \text{ under } Q, j\ge 2.
\end{align}

Denote $\psi_{i-1}(x_1,\ldots, x_{i-1})=\cSbep[Z_j^2]\vee \varphi_{i-1}(x_1,\ldots,x_{j-1})\wedge \Sbep[Z_j^2]$.
We first construct $\widetilde{Q}\in \widetilde{\mathscr{P}}_{\max}$ such that
\begin{align}\label{eqconditionE2} E_{\widetilde{Q}}[x_d^2\wedge b_d^2 |x_1,\ldots, x_{d-1}]=\psi_{d-1}(x_1,\ldots,x_{d-1}).
\end{align}

To this end, denote $\underline{\mu}_d=\cSbep[Z_d^2]$, $\overline{\mu}_d=\Sbep[Z_d^2]$. Note that $\widetilde{\mathscr{P}}_{\max}$, $\mathscr{P}\bm Y^{-1}$, and $\text{core}(\Sbep)\bm Y^{-1}$ are all weakly compact, and the suprema in (\ref{eq:core-real:3ad}) are attainable. Thus
for any $\varphi\in C_{b,Lip}(\mathbb R^d)$, there exists $\widetilde{P}\in \mathscr{P}\bm Y^{-1}\subset \text{core}(\Sbep)\bm Y^{-1}\subset \widetilde{\mathscr{P}}_{\max}$ such that
\begin{align} \label{eq:core-real:4} E_{\widetilde{P}}[\varphi]=
\sup_{P\in \mathscr{P}}E_P\big[\varphi(Y_1,\ldots, Y_d)\big]=
\Sbep\big[\varphi(Y_1,\ldots, Y_d)\big].
\end{align}
It should be pointed out that when $\varphi$ is merely Borel-measurable, the supremum in (\ref{eq:core-real:3ad}) may not be attainable.

By (\ref{eq:core-real:4}), there exist probability measures $\overline{P}_d$ and $\underline{P}_d$ on $\mathbb R$ such that
\begin{align}\label{eqproofth2.11} E_{\overline{P}_d}[\varphi]\le \Sbep[\varphi(Y_d)], \;\; E_{\underline{P}_d}[\varphi]\le \Sbep[\varphi(Y_d)], \;\; \varphi\in C_{b,Lip}(\mathbb R),
\end{align}
\begin{align}\label{eqproofth2.12} E_{\overline{P}_d}[x^2\wedge b_d^2]=\Sbep[Z_d^2]=\overline{\mu}_d, \;\; E_{\underline{P}_d}[x^2\wedge b_d^2]=\cSbep[Z_d^2]=\underline{\mu}_d.
\end{align}
For a function $\psi_{i-1}(x_1,\ldots,x_{i-1})\in [\underline{\mu}_i,\overline{\mu}_i]$, let
$$ \alpha_{i-1}=\begin{cases} \frac{\psi_{i-1}(x_1,\ldots,x_{i-1})-\underline{\mu}_i}{\overline{\mu}_i-\underline{\mu}_i}, & \text{if } \overline{\mu}_i\ne \underline{\mu}_i,\\
=1, & \text{otherwise}.
\end{cases}
$$
Then $0\le \alpha_{i-1}\le 1$. Define
$$ k_{i-1}(x_1,\ldots,x_{i-1};A)=\alpha_{i-1}\overline{P}_i(A)+(1-\alpha_{i-1})\underline{P}_i(A), \;\; A\in \mathscr{B}^1,$$
$$\widetilde{Q}_1 \text{ is a probability measure on } \mathbb R \text{ satisfying } E_{\widetilde{Q}_1}[\varphi]\le \Sbep[\varphi(Y_1)], \varphi\in C_{b,Lip}(\mathbb R), $$
$$\widetilde{Q}_i(A)=\idotsint_{(x_1,\ldots,x_i)\in A} k_{i-1}(x_1,\ldots,x_{i-1}, d x_i)\widetilde{Q}_{i-1}(dx_1,\ldots,d x_{i-1}), \;\; A\in \mathscr{B}^i, i\ge 2,$$
where $\mathscr{B}^i$ is the Borel $\sigma$-field on $\mathbb R^i$.
Then $\{\widetilde{Q}_i; i\ge 1\}$ is a sequence of probability measures satisfying the consistency condition:
$$ \widetilde{Q}_i(A\times \mathbb R)=\widetilde{Q}_{i-1}(A), \;\; A\in \mathscr{B}^{i-1}. $$
Therefore, by the Kolmogorov extension theorem, there exists a probability measure $\widetilde{Q}$ on $\mathbb R^{\mathbb N}$ such that
$$ \widetilde{Q}\pi_i^{-1}=\widetilde{Q}_i, $$
where $\pi_i(x_1,x_2,\ldots)=(x_1,\ldots,x_i)$ is the projection map.
Now, for $f(x_1,\ldots,x_d)\in C_{b,Lip}(\mathbb R^d)$, by (\ref{eqproofth2.11}) we have
\begin{align*}
& f_{d-1}(x_1,\ldots,x_{d-1})=: \int f(x_1,\ldots,x_d)k_{d-1}(x_1,\ldots,x_{d-1}; dx_d)\\
=& \alpha_{i-1}\int f(x_1,\ldots,x_d)\overline{P}_d(dx_d)+(1-\alpha_{i-1})\int f(x_1,\ldots,x_d)\underline{P}_d(dx_d)\\
\le &\Sbep[f(x_1,\ldots,x_{d-1}, Y_d)].
\end{align*}
It is easy to verify that $\Sbep[f(x_1,\ldots,x_{d-1}, Y_d)]\in C_{b,Lip}(\mathbb R^{d-1})$. By induction, we obtain
\begin{align*}
&E_{\widetilde{Q}}[f(x_1,\ldots,x_d)]=E_{\widetilde{Q}_d}[f(x_1,\ldots,x_d)]=E_{\widetilde{Q}_{d-1}}[f_{d-1}(x_1,\ldots,x_{d-1})]\\
\le & E_{\widetilde{Q}_{d-1}}\big[\Sbep[f(x_1,\ldots,x_{d-1}, Y_d)]\big]\le \Sbep\big[\Sbep[f(x_1,\ldots,x_{d-1}, Y_d)]\big|_{x_1=Y_1,\ldots, x_{d-1}=Y_{d-1}}\big]\\
=& \Sbep[f(X_1,\ldots, Y_d)].
\end{align*}
Therefore, $\widetilde{Q}\in \widetilde{\mathscr{P}}_{\max}$.

On the other hand, for any bounded Borel-measurable function $f(x_1,\ldots,x_{d-1})$, we have
\begin{align*}
& E_{\widetilde{Q}}[(x_d^2\wedge b_d^2)f(x_1,\ldots,x_{d-1})]=E_{\widetilde{Q}_d}[[(x_d^2\wedge b_d^2)f(x_1,\ldots,x_{d-1})] \\
=& \idotsint f(x_1,\ldots,x_{d-1}) (x_d^2\wedge b_d^2) k_{d-1}(x_1,\ldots,x_{d-1};dx_d)\widetilde{Q}_{d-1}(dx_1,\ldots,dx_{d-1})\\
=& \idotsint f(x_1,\ldots,x_{d-1})\big(\alpha_{d-1}\overline{\mu}_d+(1-\alpha_{d-1})\underline{\mu}_d\big)\widetilde{Q}_{d-1}(dx_1,\ldots,dx_{d-1})
 \;\; (\text{by (\ref{eqproofth2.12}})) \\
=& \idotsint f(x_1,\ldots,x_{d-1})\psi_{d-1}(x_1,\ldots,x_{d-1})\widetilde{Q}_{d-1}(dx_1,\ldots,dx_{d-1})\\
=&E_{\widetilde{Q}}\big[f(x_1,\ldots,x_{d-1})\psi_{d-1}(x_1,\ldots,x_{d-1})\big].
\end{align*}
Thus, (\ref{eqconditionE2}) holds.

Finally, by (\ref{eq:identity}), there exists $Q\in \text{core}(\Sbep)$ such that $\widetilde{Q}=Q\bm Y^{-1}$. Hence, from (\ref{eqconditionE2}), (\ref{eqconditionE}) holds.

{\em Step 4}. We prove that for any sequence of continuous functions $\{\varphi_j(x_1,\ldots,x_j), j\ge 1\}$, there exists $P\in \mathscr{P}$ such that
\begin{align}\label{eqconvergence2}
\sum_{k=1}^{\infty} P\bigg(\max_{n\le n_{k+1}}\bigg|\sum_{j=1}^n(Z_j^2-\cSbep[Z_j^2]\vee \varphi_{j-1}(Y_1,\ldots,Y_{j-1})\wedge \Sbep[Z_j^2]\bigg|\ge 2\epsilon_k n_k\bigg)\le M<\infty.
\end{align}

Let $\psi_{j-1}(x_1,\ldots,x_{j-1})=
\cSbep[Z_j^2]\vee \varphi_{j-1}(x_1,\ldots,x_{j-1})\wedge \Sbep[Z_j^2]$.
By Step 2, there exists $Q\in \text{core}(\Sbep)$ such that (\ref{eqconditionE}) holds. Combining with (\ref{eqconvergence}), we have
\begin{align} \label{eq:eqconvergence3}
\sum_{k=1}^{\infty} Q\bigg(\max_{n\le n_{k+1}}\bigg|\sum_{j=1}^n(Z_j^2-\psi_{j-1}(Y_1,\ldots,Y_{j-1}))\bigg|\ge \epsilon_k n_k\bigg)\le M<\infty.
\end{align}
Let $f\in C_{b,Lip}(\mathbb R)$ be such that $I\{|x|\ge 2\}\le f(x)\le I\{|x|\ge 1\}$. Denote
 $$f_k(\bm x)=f\bigg(\max_{ n\le n_{k+1}}\bigg|\sum_{j=1}^n(x_j^{(b_j)})^2-\varphi_{j-1}(x_1,\ldots,x_{j-1}))\bigg|/\epsilon_k n_k\bigg). $$
Then
\begin{align*}
 & E_{Q}\bigg[\sum_{k=1}^{\infty}f_k(\bm Y)\bigg]
 \le \sum_{k=1}^{\infty} Q\bigg(\max_{n\le n_{k+1}}\bigg|\sum_{j=1}^n(Z_j^2-\psi_{j-1}(Y_1,\ldots,Y_{j-1}))\bigg|\ge \epsilon_k n_k\bigg)\le M<\infty.
 \end{align*}
Note that by (\ref{eq:core-real:3ad}) we have
$$ \sup_{P\in \text{core}(\Sbep)}E_P[\varphi(Y_1,\ldots,Y_d)]=\sup_{P\in \mathscr{P}}E_P[\varphi(Y_1,\ldots,Y_d)], \;\;\varphi\in C_b(\mathbb R^d). $$
Now, for each $l$, $\sum_{k=1}^l f_k(\bm x)$ is a bounded continuous function on $\mathbb R^{n_{l+1}}$. Thus
$$ \inf_{P\in \mathscr{P}}E_P[\sum_{k=1}^lf_k(\bm Y)]=\inf_{P\in \text{core}(\Sbep)}E_P[\sum_{k=1}^lf_k(\bm Y)]\le E_Q[\sum_{k=1}^lf_k(\bm Y)]\le M. $$
Therefore, there exists $P_l\in \mathscr{P}$ such that
$$ E_{P_l}[\sum_{k=1}^lf_k(\bm Y)]\le M+1/l. $$
By the weak compactness of $\mathscr{P}$, there exists a subsequence $l^{\prime}\nearrow \infty$ and $P\in \mathscr{P}$ such that $P_{l^{\prime}}$ converges weakly to $P$. From this we obtain
$$ E_{P}[\sum_{k=1}^lf_k(\bm Y)]=\lim_{l^{\prime}\to \infty} E_{P_{l^{\prime}}}[\sum_{k=1}^lf_k(\bm Y)]
\le \liminf_{l^{\prime}\to \infty}E_{P_{l^{\prime}}}[\sum_{k=1}^{l^{\prime}}f_k(\bm Y)]\le M,\;\; \forall l\ge 1. $$
Therefore
 \begin{align*}
 &\sum_{k=1}^{\infty} P\bigg(\max_{ n\le n_{k+1}}\bigg|\sum_{j=1}^n(Z_j^2-\psi_{j-1}(Y_1,\ldots,Y_{j-1}))\bigg|\ge 2\epsilon_k n_k\bigg) \\
 & \;\; \le E_{P}[\sum_{k=1}^{\infty}f_k(\bm Y)]\le M<\infty,
\end{align*}
where $M$ and $\{\epsilon_k\}$ do not depend on the choice of $\varphi_i$s and $P$.

{\em Step 5}. We prove (\ref{eq:prop2.2}).

Suppose $\varphi(\bm x)$ is a continuous function on $\mathbb{R}^{\mathbb N}$. Without loss of generality, assume $\varphi(\bm x)\in [\underline{\sigma}^2,\overline{\sigma}^2]$. Let $\bm x_0=(x_1^0,x_2^0,\ldots)$, $\varphi_j(x_1,\ldots,x_j)=
\varphi(x_1,\ldots,x_j, x_{j+1}^0,x_{j+2}^0,\ldots)$. Then by the continuity of $\varphi(\cdot)$, for any $\bm x=(x_1,x_2,\ldots)$ we have
$$ \varphi_j(x_1,\ldots,x_j)\to \varphi(x_1,x_2,\ldots). $$
Since $\Sbep[Z_j^2]\to \overline{\sigma}^2$ and $\cSbep[Z_j^2]\to \underline{\sigma}^2$, we have
$$ \frac{\sum_{j=1}^n \psi_{j-1}(x_1,\ldots,x_{j-1})}{n}\to \varphi(x_1,x_2,\ldots), $$
where $\psi_0=0$, and $\psi_{j-1}(x_1,\ldots, x_{j-1})=\cSbep[Z_j^2]\vee \varphi_{j-1}(x_1,\ldots, x_{j-1})\wedge \Sbep[Z_j^2]$. Since $\psi_{j}(x_1,\ldots, x_{j})$ is continuous, by Step 4, there exists $P\in \mathscr{P}$ such that (\ref{eqconvergence2}) holds. Therefore
$$ P\bigg(\lim_{n\to\infty}\frac{V_n^2}{n}=\varphi(Y_1,Y_2,\ldots)\bigg)
=P\bigg(\lim_{n\to\infty}\frac{\sum_{j=1}^n \big(Z_j^2-\psi_{j-1}(Y_1,\ldots, Y_{j-1})\big)}{n}=0\bigg)=1. $$

Now assume $\varphi(\bm x)=\varphi(x_1,\ldots,x_d)$ is a Borel-measurable function on $\mathbb{R}^{d}$. Without loss of generality, assume $\varphi(\bm x)\in [\underline{\sigma}^2,\overline{\sigma}^2]$. Define
$$ \psi_j(x_1,\ldots, x_j)=\Sbep[Z_{j+1}^2], \; 0\le j\le d-1, $$
$$\psi_{j}(x_1,\ldots, x_{j})=\cSbep[Z_{j+1}^2]\vee \varphi(x_1,\ldots, x_d)\wedge \Sbep[Z_{j+1}^2], \; j\ge d. $$
Then
$$ \frac{\sum_{j=1}^n \psi_{j-1}(x_1,\ldots,x_{j-1})}{n}\to \varphi(x_1,\ldots,x_d). $$
By Steps 1 and 3, there exists $Q\in \text{core}(\Sbep)$ such that (\ref{eq:eqconvergence3}) holds. Thus
$$ Q\bigg(\lim_{n\to\infty}\frac{V_n^2}{n}=\varphi(Y_1,\ldots, Y_d)\bigg)
=Q\bigg(\lim_{n\to\infty}\frac{\sum_{j=1}^n \big(Z_j^2-\psi_{j-1}(Y_1,\ldots, Y_{j-1})\big)}{n}=0\bigg)=1. $$
The conclusion is proved.
\end{proof}

At last, we remove the condition (\ref{eqLILmomentcondition3}) in Theorem \ref{thLIL2}. It is sufficient to replace Proposition \ref{prop:1}  by the following proposition.

\begin{proposition} \label{prop:3} Assume that there exists a family of probability measures $\mathscr{P}$ on $(\Omega,\sigma(\mathscr{H}))$ such that the sub-linear expectation $\Sbep$ satisfies (\ref{eqexpressbyP}). Assume conditions (\ref{eqLILmomentcondition1}), (\ref{eqLILmeanzerocondition}) and (\ref{eqLILmomentcondition2}) hold. Then for any $P\in \mathscr{P}$, on the event $\big\{V_n/\sqrt{n}\to \zeta\big\}$ we have
\begin{align}\label{eq:prop3.1} Cl\bigg\{ \frac{S_n}{\sqrt{2n\log\log n}}\bigg\}=[-\zeta,\zeta]\;\; a.s. \text{ under } P,
\end{align}
\begin{align}\label{eq:prop3.2}  \lim_{n\to \infty} \max_{k\le n} \frac{S_k}{\sqrt{2k\log\log n}}=\zeta\text{ and } \lim_{n\to \infty} \min_{k\le n} \frac{S_k}{\sqrt{2k\log\log n}}=-\zeta\;\; a.s. \text{ under } P. 
\end{align}
Here, $B$ a.s.  under $P$ on $A$ means that $P(B^cA)=0$. 
 \end{proposition}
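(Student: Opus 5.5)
Fix $P\in\mathscr{P}$ and work on the event $A=\{V_n/\sqrt n\to\zeta\}$. I would split $A$ into $A_+=A\cap\{\zeta>0\}$ and $A_0=A\cap\{\zeta=0\}$.

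\emph{The case $\zeta>0$.} On $A_+$ the whole proof of Proposition \ref{prop:1} goes through almost verbatim. Condition (\ref{eqLILmomentcondition3}) was used there only to guarantee (\ref{eq:prop:1.1}), namely $V_n^2\approx n$. On $A_+$ this holds pathwise, and then (\ref{eq:prop:1.2}) and (\ref{eq:prop:1.7}) hold a.s.\ on $A_+$.

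The remaining ingredients do not depend on the event. These are the conditional mean bound from Lemma \ref{lem3}, the martingale strong laws (\ref{eq:prop:1.5})--(\ref{eq:prop:1.6}), the Skorokhod embedding, and the remainder estimate (\ref{eq:remainder}); the last only needs $\sqrt{2n\log\log n}$ normalization, which is comparable to $V_n\sqrt{2\log\log n}$ on $A_+$. One point needs care: on $A_+$ we have $T_n\to\infty$ a.s., so (\ref{eq:embed:2}) applies. Also, the approximation of $\max_{s\le T_n}W(s)/\sqrt s$ by $\max_k W(T_k)/\sqrt{T_k}$ only uses $T_n\approx n$ and $\max_{k\le n}\tau_k=o(n)$, both of which are valid on $A_+$.

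This gives (\ref{eq:prop1.1}) and (\ref{eq:prop1.2}) a.s.\ on $A_+$. Since $V_k/\sqrt k\to\zeta$, the fact (\ref{eq:prop:1.7ad}), applied to the early indices, lets us replace $V_k$ by $\zeta\sqrt k$ uniformly in $k\le n$. The early $k$ contribute $o(\sqrt{\log\log n})$ because $\max_{k\le m}|S_k|/\sqrt{k}$ is a.s.\ finite for each fixed $m$. This yields (\ref{eq:prop3.1}) and (\ref{eq:prop3.2}) on $A_+$.

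\emph{The case $\zeta=0$.} This is the main obstacle, since self-normalization degenerates. Here I would show directly that $\max_{k\le n}|S_k|/\sqrt{2k\log\log n}\to0$ a.s.\ on $A_0$. Both (\ref{eq:prop3.1}) and (\ref{eq:prop3.2}) with $\zeta=0$ follow from this. The lower bounds are trivial because $S_1/\sqrt{2\log\log n}\to0$.

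By (\ref{eq:remainder}), which holds a.s.\ under $P$ without (\ref{eqLILmomentcondition3}), it suffices to treat the martingale $M_k$. Its conditional variance satisfies $\langle M\rangle_n\le\sum_{j\le n}E_P[Z_j^2|\mathcal F_{j-1}]$. The martingale strong law (\ref{eq:prop:1.6}) (using Lemma \ref{lem4}) gives $\sum_{j\le n}E_P[Z_j^2|\mathcal F_{j-1}]=V_n^2+o(n)=o(n)$ on $A_0$.

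\textbf{Step 1: the plain $\limsup$.} Fix $\epsilon>0$ and the geometric blocks $n_k=[\lambda^k]$. Apply Lemma \ref{lem:ExpIneqM} with $c=2b_{n_{k+1}}$, $y=\epsilon^2 n_{k+1}$ and $x=\epsilon' d_{n_k}$. Since $c\,x=o(n_k)$, the bound is $\exp\{-(1+o(1))\epsilon'^2 d_{n_k}^2/(2\epsilon^2 n_{k+1})\}$, which is summable when $\epsilon'^2/\epsilon^2>\lambda$. Eventually on $A_0$ we have $\langle M\rangle_{n_{k+1}}\le\epsilon^2n_{k+1}$, so the event in Lemma \ref{lem:ExpIneqM} applies, and Borel--Cantelli gives $\limsup|M_n|/d_n\le C\epsilon$ on $A_0$. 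Letting $\epsilon\to0$ gives $|S_n|=o(\sqrt{n\log\log n})$ on $A_0$.

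\textbf{Step 2: the normalized maximum.} The harder part is controlling $\max_{k\le n}|S_k|/\sqrt{k}$ against $\sqrt{\log\log n}$ uniformly over all $k\le n$. Step 1 handles $k\ge k_0(\omega)$ via $|S_k|\le\epsilon\sqrt{2k\log\log k}\le\epsilon\sqrt{2k\log\log n}$. For $k<k_0$ the terms are a finite random collection divided by $\sqrt{\log\log n}\to\infty$, so they vanish. Combining the two ranges finishes the proof.
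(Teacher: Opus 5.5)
Your argument is correct. On $\{\zeta>0\}$ you follow the paper: rerun the proof of Proposition \ref{prop:1} on the event, where $V_n^2\approx n$ now holds pathwise, and then pass from $V_k$ to $\zeta\sqrt k$. The paper makes that conversion at the martingale level via $T_k\sim\zeta^2k$ in (\ref{eq:prop:1.8}); you make it after adding back the remainder. The difference is immaterial.

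On $\{\zeta=0\}$ your route is genuinely different. The paper stays inside the Skorokhod embedding: it shows $T_n=V_n^2+o(n)=o(n)$ and then controls $M_k=W(T_k)$ by the growth of Brownian motion on $[0,o(n)]$. You bypass the embedding and apply Lemma \ref{lem:ExpIneqM} directly to $M$ along geometric blocks. This needs only two inputs:
\begin{itemize}
\item $\langle M\rangle_n\le\sum_{j\le n}E_P[Z_j^2|\mathcal F_{j-1}]=V_n^2+o(n)$, which is the analogue of (\ref{eq:prop:1.6}) for $Z_j^2$ and is proved the same way;
\item $|\Delta M_j|\le 2b_j$, where $\alpha_j\to0$ makes $xc=o(n_k)$.
\end{itemize}
Your version is more elementary and uses nothing about the $\tau_j$. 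The paper's version is shorter, given the machinery already built for Proposition \ref{prop:1}.

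Keep your Step 2 explicit. The paper's display for $\zeta=0$ passes from $\max_{k\le n}|M_k|/\sqrt{2k\log\log n}$ to $\max_{k\le n}|W(T_k)|/\sqrt{2n\log\log n}$ with an equality sign. That step is not justified as written, because $\sqrt k\le\sqrt n$ makes the left side the larger one. What actually justifies the conclusion is a per-index bound $|M_k|=o(\sqrt{k\log\log k})$, which follows from $T_k=o(k)$ and the LIL for $W$. That bound must then be combined with exactly your split into $k<k_0(\omega)$ and $k\ge k_0(\omega)$.
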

 \begin{proof} Let $\Omega_0=\big\{V_n/\sqrt{n}\to \zeta\big\}$. By (\ref{eq:prop2.1}), on $\Omega_0$, $\underline{\sigma}\le \zeta\le \overline{\sigma}$ a.s. under $P$. It is easily seen that, on the event 
 $\Omega_0\bigcap \{\zeta>0\}$, (\ref{eq:prop:1.1}), (\ref{eq:prop:1.2}) and (\ref{eq:prop:1.8}) remain true.  Thus, on the event $\Omega_0\bigcap \{\zeta>0\}$,
 \begin{align}\label{eq:prop:3.1}
& \lim_{n\to \infty} \max_{k\le n}\frac{M_k}{\sqrt{2k\log\log n}}
=\zeta \;\; a.s. \text{ under } P.
\end{align}
On the other hand, on the event  $\Omega_0\bigcap \{\zeta=0\}$,
\begin{align*} 
 &T_n=\sum_{j=1}^n \tau_j = \sum_{j=1}^n E_P[\tau_j|\mathscr{G}_{j-1}]+o(n)\\
  = & \sum_{j=1}^n E_P[(\Delta M_j)^2|M_1,\ldots, M_{j-1}] +o(n) \\
  =& U_n^2+o(n)=V_n^2+o(n)=o(n)\;\; a.s. \text{ under } P.
\end{align*}
Thus, on the event $\Omega_0\bigcap \{\zeta=0\}$,
\begin{align*} 
& \limsup_{n\to \infty} \max_{k\le n}\frac{|M_k|}{\sqrt{2k\log\log n}}
 = \limsup_{n\to \infty} \max_{k\le n}\frac{|W(T_k)|}{\sqrt{2n\log\log n}}\\
 \le & \limsup_{n\to \infty} \max_{0\le s\le o(n)}\frac{|W(s)|}{\sqrt{2n\log\log n}}=0 \;\; a.s. \text{ under } P.
\end{align*}
We conclude that on the event $\Omega_0$, (\ref{eq:prop:3.1}) holds.  On the other hand, (\ref{eq:remainder}) remains true since in Zhang \cite{Zhang2022}, the condition (\ref{eqLILmomentcondition3}) is only used for replacing $n$ by $V_n^2$.  Therefore,   On the event $\Omega_0$, 
 $$ \lim_{n\to\infty}\max_{k\le n} \frac{\sum_{j=1}^k Y_j}{\sqrt{2k\log\log n}}=\zeta \; \; a.s. \text{ under } P.
$$
Similarly,
$$ \lim_{n\to\infty}\min_{k\le n} \frac{\sum_{j=1}^k Y_j}{\sqrt{2k\log\log n}}=-\lim_{n\to\infty}\max_{k\le n} \frac{\sum_{j=1}^k (-Y_j)}{\sqrt{2k\log\log n}}=-\zeta \; \; a.s. \text{ under } P \text{ on } \Omega_0.
$$
Thus (\ref{eq:prop3.2}) is proved. (\ref{eq:prop3.1}) is implied by (\ref{eq:prop3.2}). 
 \end{proof}

\begin{remark} Recently, Gu and Zhang \cite{GuZhang2026a,GuZhang2026b} proved the strong law of large numbers and the law of the iterated logarithm for $m$-dependent random variables under sub-linear expectations. A natural question is whether similar results to Theorem \ref{thLIL2} or (\ref{eq:chenLIL}) hold for $m$-dependent random variables.
\end{remark}

\bigskip
\noindent {\bf Acknowledgements}: { Li-Xin Zhang was supported by the National Key Research and Development Program of China (Grant No. 2024YFA1013502), the National Natural Science Foundation of China (Grant No. U23A2064), and the Zhejiang Provincial Peak Discipline Program (Zhejiang Gongshang University -- Statistics). Yongsheng Song was supported by the National Key Research and Development Program of China (Grant No. 2024YFA1013503) and the National Natural Science Foundation of China (Grant No. 12431017).}



\bigskip

  \clearpage
\end{document}